\numberwithin{equation}{section} \setlength{\textwidth}{16cm}
\newtheorem{theorem}{Theorem}[section]
\newtheorem{corollary}{Corollary}[section]
\newtheorem{lemma}{Lemma}[section]
\theoremstyle{definition}
\newtheorem{definition}{Definition}[section]
\theoremstyle{remark}
\newtheorem{remark}{Remark}[section]
\numberwithin{equation}{section}
\thanks{$\ast$ Corresponding Author: N. Magesh ($nmagi\_2000@yahoo.co.in$)\\ The second author is thankful to Prof. S. R. Swamy, {Department of Computer Science and Engineering, R. V. College of Engineering, Bangalore-560 059, Karnataka, India, for his constant support and guidance.}}
\begin{document}
\title[bi-univalent functions with $\kappa-$Fibonacci numbers]{Initial estimates for certain subclasses  of bi-univalent functions with $\kappa-$Fibonacci numbers}
\author{N. Magesh$^{1, \ast}$, J. Nirmala$^{2}$ and J. Yamini$^{3}$}
\keywords{Univalent functions, bi-univalent functions, $\kappa-$Fibonacci numbers, shell-like function, convex shell-like function, pseudo starlike function, Bazilevi\'{c} function}
\subjclass[2010]{Primary 30C45; Secondary 30C50}
\maketitle
\begin{center}
	{$^{\ast,\; 1}$Post-Graduate and Research Department of Mathematics,\\
	Government Arts College for Men,\\
	Krishnagiri 635001, Tamilnadu, India.\\
	\texttt{e-mail:} $nmagi\_2000@yahoo.co.in$}\\
	\texttt{ORCID Address:} http://orcid.org/0000-0002-0764-8390.\\
	$^{2}${Department of Mathematics,\; 
	Maharani's Science College for Women\\
	Bangalore-560 0001, Karnataka, India.\\
	{\bf e-mail~:~} $nirmalajodalli@gmail.com.$}\\
\texttt{ORCID Address:} http://orcid.org/0000-0002-1048-5609.\\
$^{3}${Department of Mathematics, Govt First Grade College\\ Vijayanagar, Bangalore-560104,\; Karnataka, India.\\
	{\bf e-mail~:~} $yaminibalaji@gmail.com$.}\\
	\texttt{ORCID Address:} http://orcid.org/0000-0002-3544-7588.
\end{center}
\date{}
\begin{abstract}
	In this  work, we consider certain class of bi-univalent functions related with shell-like curves related to $\kappa-$Fibonacci numbers. Further, we obtain the estimates of initial Taylor-Maclaurin coefficients (second and third coefficients) and Fekete - Szeg\"{o}  inequalities. Also we discuss the special cases of the obtained results. 
\end{abstract}
\maketitle


\section{Introduction and definitions}
Let $\mathbb{R}=\left( -\infty ,\infty \right) $\ be the set of real
numbers,  and%
\begin{equation*}
\mathbb{N}:=\left\{ 1,2,3,\ldots \right\} =\mathbb{N}_{0}\backslash \left\{
0\right\}
\end{equation*}%
be the set of positive integers. Let $\mathcal{A}$ denote the class of functions of the form
\begin{equation}
f(z)=z+\sum\limits_{n=2}^{\infty }a_{n}z^{n}  \label{Int-e1}
\end{equation}%
which are analytic in the open unit disk $\mathbb{D}=\{z:z\in \mathbb{C}\,\,%
\mathrm{and}\,\,|z|<1\},$ where $\mathbb{C}$ be the set of complex numbers. Further, by $\mathcal{S}$ we shall denote the
class of all functions in $\mathcal{A}$ which are univalent in $\mathbb{D}.$

\par Let $\mathcal{P}$ denote the class of functions of the form
\begin{align*}
p(z)=1+p_1z+p_2z^2+p_3z^3+\ldots, \qquad z \in \mathbb{D}
\end{align*}
which are analytic with $\Re~\{p(z)\}>0.$ Here $p$ is called as Caratheodory functions \cite{PLD}. It is well known that the following correspondence between the class $\mathcal{P}$ and the class of Schwarz functions $w$ exists: $p \in \mathcal{P}$ if and only if $p(z)={1+w(z)}\left/\right.{1-w(z)}.$ Let $\mathcal{P}(\beta),$ $0 \leq \beta < 1,$ denote the class of analytic functions $p$ in $\mathbb{D}$ with $p(0)=1$ and $\Re\left\{p(z)\right\} > \beta.$ 

\par For analytic functions $f$ and $g$ in $\mathbb{D},$ $f$ is said to be
subordinate to $g$ if there exists an analytic function $w$ such that 
\begin{equation*}
w(0)=0, \quad \quad |w(z)|<1 \quad \mathrm{and} \quad f(z)=g(w(z)), \qquad
z\in\mathbb{D},
\end{equation*}
denoted by
\begin{equation*}
f \prec g, \qquad
z\in\mathbb{D}
\end{equation*}
or, conventionally, by
\begin{equation*}
f(z) \prec g(z), \qquad
z\in\mathbb{D}.
\end{equation*}

In particular, when $g$ is univalent in $\mathbb{D},$
\begin{equation*}
f \prec g \qquad (z\in\mathbb{D}) ~\Leftrightarrow ~f(0)=g(0) \quad \mathrm{%
	and} \quad f(\mathbb{D}) \subset g(\mathbb{D}).
\end{equation*}
Some of the important and well-investigated subclasses of $\mathcal{S}$ include (for example) the class $\mathcal{S}%
^{\ast }(\alpha )$ of starlike functions of order $\alpha $ $(0\leqq \alpha
<1)$ in $\mathbb{D}$ and the class $\mathcal{K}(\alpha )$ of convex
functions of order $\alpha $ $(0\leqq \alpha <1)$ in $\mathbb{D},$ 
the class $\mathcal{S}^{\ast }(\varphi )$ of Ma-Minda starlike functions and
the class $\mathcal{K}(\varphi )$ of Ma-Minda convex functions, where $\varphi $
is an analytic function with positive real part in $\mathbb{D}$, $\varphi
(0)=1,$ $\varphi ^{\prime }(0)>0$ and $\varphi $ maps $\mathbb{D}$ onto a
region starlike with respect to 1 and symmetric with respect to the real
axis) (see \cite{PLD}. 

The inverse functions of the functions in the class $\mathcal{S}$ may not be defined on the entire unit disc $\mathbb{D}$ although the functions in the class $\mathcal{S}$ are invertible. However using Koebe one quarter theorem \cite{PLD} it is obvious that the image of $\mathbb{D}$ under every function $f \in \mathcal{S}$ contains a disc of radius $1/4.$ Hence every function $f\in \mathcal{S}$ has an inverse $%
f^{-1},$ defined by
\begin{equation*}
f^{-1}(f(z))=z, \qquad
z\in\mathbb{D}
\end{equation*}
and
\begin{equation*}
f(f^{-1}(w))=w, \qquad |w| < r_0(f);\,\, r_0(f) \geqq \frac{1}{4},
\end{equation*}
where
\begin{equation*}
f^{-1}(w) = w - a_2w^2 + (2a_2^2-a_3)w^3 - (5a_2^3-5a_2a_3+a_4)w^4+\ldots .
\end{equation*}

A function $f\in \mathcal{A}$ is said to be bi-univalent in $\mathbb{D}$ if
both $f$ and $f^{-1}$ are univalent in $\mathbb{D}.$ Let $\Sigma $
denote the class of bi-univalent functions in $\mathbb{D}$ given by (\ref%
{Int-e1}). 
Various subclasses of $\Sigma $ were introduced and non-sharp
estimates on the first two coefficients $|a_{2}|$ and $|a_{3}|$ in the
Taylor-Maclaurin series expansion (\ref{Int-e1}) were found in several
recent investigations (see, for example, \cite%
{CA-NM-JY-NBG-2020-JA,Ali-Ravi-Ma-Mina-class,SA-SY-BMM-2018,Bulut,Caglar-Orhan,Deniz,VBG-SBJ-Ganita-2018,Jay-SGH-SAH-2014,HONMVKBAfr-2015,HO-NM-VKB-TMJ-2017,HO-NM-VKB-2019-AEJM,HMS-DB-2015,HMS-Caglar, HMS-SSE-RMA-2015, HMS-SG-FG-2015, HMS-AKM-PG,HMS-GMS-NM-2013,Tang,Tang-NM-VKB-CA-2019-JIA,Zaprawa}
and references therein). However, the problem is to find the coefficient bounds on $|a_{n}|
$ ($n=3,4,\dots $) for functions $f\in \Sigma $ is still an open problem.

\par Falcon and Plaza \cite{SF-AP-2006-CSF} have introduced the Fibonacci $\kappa-$ numbers as below. The recurrence relation for the $\kappa-$Fibonacci  sequence $\left\{F_{\kappa,\; n}\right\}_{n=0}^{\infty}$, where $\kappa \in \mathbb{R}$ as defined as 
\[
F_{\kappa,\; n+1} = \kappa F_{\kappa,\; n} + F_{\kappa,\; n-1}
\]
for $n \geq 1$ with the initial conditions $F_{\kappa,\; 0}=0,$ $F_{\kappa,\; 1}=1$ and 
$F_{\kappa,\; n}=\dfrac{(\kappa - \tau_{\kappa})^{n} - \tau_{\kappa}^{n}}{\sqrt{\kappa^{2}}+4}$ with $\tau_{\kappa} = \dfrac{\kappa - \sqrt{\kappa^{2}+4}}{2}, \; z \in \mathbb{D},$ where $F_{\kappa,\; n}$ represents the $n^{\hbox{th}}$ element of the $\kappa-$Fibonacci sequence. 

\par Let $\{u_{n}\}$ be the sequence of Fibonacci numbers. The recurrence relation for Fibonacci sequence is given by 
\[
u_{n+2} = u_{n+1} + u_{n}, \qquad n = 0,\; 1,\; 2,\; \cdots
\]
with the initial conditions $u_{0}=0$ and $u_{1}=1.$

\par Sokol et al. \cite{JS-RKR-NYO-2015-HJMS} proved that if $\tilde{p}_{\kappa}(z) = \dfrac{1+\tau_{\kappa}^{2}z^{2}}{1-\kappa \tau_{\kappa}z - \tau_{\kappa}^{2}z^{2}} = 1 + \sum\limits_{n=1}^{\infty} \tilde{p}_{\kappa,\; n}z^{n},$ then we have $\tilde{p}_{\kappa,\; n} = (F_{\kappa,\; n-1} + F_{\kappa,\; n+1})\tau_{\kappa}^{n},$ $n \geq 1.$ This shows the relevant connection between the Fibonacci $\kappa-$numbers and the coefficients of $\tilde{p}_{\kappa}(z).$ That is,
\begin{eqnarray*}
\tilde{p}_{\kappa}(z) 
&=& \dfrac{1+\tau_{\kappa}^{2}z^{2}}{1-\kappa \tau_{\kappa}z - \tau_{\kappa}^{2}z^{2}} \\
&=& 1 + \sum\limits_{n=1}^{\infty} \tilde{p}_{\kappa,\; n}z^{n}\\
&=& 1 + (F_{\kappa,\; 0} + F_{\kappa,\; 2}) \tau_{\kappa} z 
	  + (F_{\kappa,\; 1} + F_{\kappa,\; 3}) \tau_{\kappa}^{2} z^{2} 
	  + \cdots \\
&=& 1 + \kappa \tau_{\kappa} z + (\kappa^{2} + 2) \tau_{\kappa}^{2} z^{2}
	  + (\kappa^{3} + 3\kappa) \tau_{\kappa}^{3} z^{3}
	  + \cdots,
\end{eqnarray*}
where $\tau_{\kappa} = \dfrac{\kappa - \sqrt{\kappa^{2}+4}}{2}, \; z \in \mathbb{D}$ (see \cite{NYO-JS-2015-BMMSS}).
\par The classes $\mathcal{S}\mathcal{L}(\tilde{p})$ and $\mathcal{K}\mathcal{S}\mathcal{L}(\tilde{p})$ of shell-like functions and convex shell-like functions are respectively, characterized by $zf^{\prime}\left/\right.f(z)\prec \tilde{p}(z)$ or $1+z^{2}f^{\prime\prime}\left/\right.f^{\prime}(z)\prec \tilde{p}(z),$ where $\tilde{p}(z)={(1+\tau^{2}z^{2})}\left/\right.{(1-\tau z - \tau^{2}z^{2})},$ $\tau = {(1-\sqrt{5})}\left/\right.{2} \approx -0.618.$ The classes $\mathcal{S}\mathcal{L}(\tilde{p})$ and $\mathcal{K}\mathcal{S}\mathcal{L}(\tilde{p})$ were introduced and studied by Sok\'{o}{\l} \cite{JS-1999} and Dziok et al. \cite{JD-RKR-JS-CMA-2011} respectively (see also \cite{JD-RKR-JS-AMC-2011,RKR-JS-2016}). 

Recently Raina and Sok\'{o}{\l} \cite{RKR-JS-2016}, taking $\tau z =t,$ showed that
\begin{eqnarray}
\tilde{p}(z) 
&=& \dfrac{1+\tau^{2}z^{2}}{1-\tau z - \tau^{2}z^{2}} 
= 1 + \sum\limits_{n=1}^{\infty} \left(u_{n-1}+u_{n+1}\right)\tau^{n}z^{n},
\end{eqnarray}
where
\begin{eqnarray}
u_{n}=\dfrac{(1-\tau)^{n}-\tau^{n}}{\sqrt{5}},\qquad \tau = \dfrac{1-\sqrt{5}}{2}, \qquad n=1,\; 2,\; \ldots.
\end{eqnarray}
This shows that the relevant connection of $\tilde{p}$ with the sequence of Fibonacci numbers $u_{n}$, such that 
\[
u_{0} = 0,\quad u_{1}=1, \quad u_{n+2} = u_{n}+u_{n+1}
\]
for $n=0,\; 1,\; 2,\; 3,\; \ldots~.$ Hence
\begin{eqnarray}
\tilde{p}(z) 
&=& 1 + \tau z + 3\tau^{2}z^{2} + 4\tau^{3}z^{3}+7\tau^{4}z^{4}+11\tau^{5}z^{5}+\ldots.   
\end{eqnarray}
We note that the function $\tilde{p}$ belongs to the class $P(\beta)$ with $\beta = \dfrac{\sqrt{5}}{10} \approx 0.2236$ (see \cite{RKR-JS-2016}).

The classes $\mathcal{S}\mathcal{L}^{\kappa}(\tilde{p}_{\kappa})$ and $\mathcal{K}\mathcal{S}\mathcal{L}^{\kappa}(\tilde{p}_{\kappa})$  characterized by $zf^{\prime}\left/\right.f(z)\prec \tilde{p}_{\kappa}(z)$ or $1+z^{2}f^{\prime\prime}\left/\right.f^{\prime}(z)\prec \tilde{p}_{\kappa}(z),$ where $\tilde{p}_{\kappa}(z)={(1+\tau_{\kappa}^{2}z^{2})}\left/\right.{(1-\kappa\tau_{\kappa} z - \tau_{\kappa}^{2}z^{2})},$ $\tau_{\kappa} = {(\kappa-\sqrt{\kappa^{2}+4})}\left/\right.{2}.$ The classes $\mathcal{S}\mathcal{L}^{\kappa}(\tilde{p}_{\kappa})$ and $\mathcal{K}\mathcal{S}\mathcal{L}^{\kappa}(\tilde{p}_{\kappa})$ were introduced and studied by Yilmaz \"{O}zg\"{u}r and Sok\'{o}{\l} \cite{NYO-JS-2015-BMMSS} and G\"{u}ney et al. \cite{HOG-JS-IS-2018-AUA} respectively (see also \cite{JS-RKR-NYO-2015-HJMS}). It was proved in \cite{NYO-JS-2015-BMMSS} that functions in the class $\mathcal{S}\mathcal{L}^{\kappa}(\tilde{p}_{\kappa})$ are univalent in $\mathbb{D}.$ Moreover,
the class $\mathcal{S}\mathcal{L}^{\kappa}(\tilde{p}_{\kappa})$ is a subclass of the class of starlike functions $\mathcal{S}^{\ast}$ even more, starlike of order ${(\kappa{(\kappa^{2}+4)}^{-1/2})}\left/\right.{2}.$

For $\kappa = 1$, the classes  $\mathcal{S}\mathcal{L}(\tilde{p})$ and $\mathcal{K}\mathcal{S}\mathcal{L}(\tilde{p})$ were introduced and studied by Sok\'{o}{\l} \cite{JS-1999} and Dziok et al. \cite{JD-RKR-JS-CMA-2011} respectively (see also \cite{JD-RKR-JS-AMC-2011,VSM-AE-SY-2019-MS,RKR-JS-2016}) and the references therein. Also, one can refer \cite{SA-SY-SC-2019-Math, SA-GH-JMJ-2019-arXiv, HOG-2019-SCMA,NM-VKB-CA-2016} and references therein for other subclasses of bi-univalent functions which are related with shell-like curves connected with Fibonacci numbers.


Recently, in literature, the initial coefficient estimates are found for functions in the class of bi-univalent functions associated with certain polynomials like the Faber polynomial, the Lucas polynomial, the Chebyshev polynomial and the Gegenbauer
polynomial. Motivated in this line, inspired by the works of Ali et al. \cite{Ali-Ravi-Ma-Mina-class}, G\"{u}ney \cite{HOG-2019-SCMA}, G\"{u}ney et al. \cite{HOG-GMS-JS-2019-CFS-kF} and Magesh et al. \cite{NM-VKB-CA-2016}, we introduce the following new subclasses of bi-univalent functions, as follows:

\begin{definition}
\label{def1.1} A function $f\in \Sigma$ of the form 
\begin{equation*}
f(z) = z + \sum\limits_{n=2}^{\infty} a_{n} z^{n}, 
\end{equation*}
belongs to the class $\mathcal{W}\mathcal{S}\mathcal{L}_{\Sigma}^{\kappa}(\gamma, \lambda, \alpha, \tilde{p}_{\kappa}),
$ $\gamma\in \mathbb{C}\backslash\{0\},$ $\alpha \geq 0$ and $\lambda \geq 0,$  if the following conditions are satisfied: 
\begin{equation}  \label{CR-RAP-NM-P1-e1}
1+\frac{1}{\gamma}\left((1-\alpha+2\lambda)\frac{f(z)}{z}+(\alpha-2%
\lambda)f^{\prime }(z)+\lambda zf^{\prime \prime }(z)-1\right) \prec \widetilde{p_{\kappa}(z)} = \dfrac{1+\tau^{2}_{\kappa}z^{2}}{1-\kappa\tau_{\kappa} z - \tau^{2}_{\kappa}z^{2}},\; z\in\mathbb{D},
\end{equation}
and for $g(w)=f^{-1}(w)$ 
\begin{equation}  \label{CR-RAP-NM-P1-e2}
1+\frac{1}{\gamma}\left((1-\alpha+2\lambda)\frac{g(w)}{w}+(\alpha-2%
\lambda)g^{\prime }(w)+\lambda wg^{\prime \prime }(w)-1\right) \prec \widetilde{p_{\kappa}(w)}=\dfrac{1+\tau^{2}_{\kappa}w^{2}}{1-\kappa\tau_{\kappa} w - \tau^{2}_{\kappa}w^{2}},\; w\in\mathbb{D},
\end{equation}
where
$\tau_{\kappa} = \dfrac{\kappa-\sqrt{\kappa^{2}+4}}{2}.$
\end{definition}

It is interesting to note that the special values of $\alpha,$ $\gamma$ and $\lambda$ lead the class $\mathcal{W}\mathcal{S}\mathcal{L}_{\Sigma}^{\kappa}(\gamma,
\lambda, \alpha, \tilde{p}_{\kappa})$ to various subclasses, as following illustrations:

\begin{enumerate}
\item For $\alpha=1+2\lambda,$ we get the class $\mathcal{W}\mathcal{S}\mathcal{L}
_{\Sigma}^{\kappa}(\gamma, \lambda, 1+2\lambda, \tilde{p}_{\kappa})\equiv \mathcal{F}\mathcal{S}\mathcal{L}_{\Sigma}^{\kappa}(\gamma, \lambda, \tilde{p}_{\kappa}).$ A function $f\in \Sigma$ of the form 
\begin{equation*}
f(z) = z + \sum\limits_{n=2}^{\infty} a_{n} z^{n}, 
\end{equation*}
is said to be in $\mathcal{F}\mathcal{S}\mathcal{L}_{\Sigma}^{\kappa}(\gamma, \lambda, \tilde{p}_{\kappa}),$ if the
following conditions 
\begin{equation*}
1+\frac{1}{\gamma}\left(f^{\prime }(z)+\lambda zf^{\prime \prime
}(z)-1\right) \prec \widetilde{p_{\kappa}(z)}=\dfrac{1+\tau^{2}_{\kappa}z^{2}}{1-\kappa\tau_{\kappa} z - \tau^{2}_{\kappa}z^{2}},\;z\in\mathbb{D} 
\end{equation*}
and for $g(w)=f^{-1}(w)$ 
\begin{equation*}
1+\frac{1}{\gamma}\left(g^{\prime }(w)+\lambda wg^{\prime \prime
}(w)-1\right) \prec \widetilde{p_{\kappa}(w)}
=\dfrac{1+\tau^{2}_{\kappa}w^{2}}{1-\kappa\tau_{\kappa} w - \tau^{2}_{\kappa}w^{2}},\;w\in\mathbb{D}, 
\end{equation*}
hold, where
$\tau_{\kappa} = \dfrac{\kappa-\sqrt{\kappa^{2}+4}}{2}.$

\item For $\lambda=0,$ we obtain the class $\mathcal{W}\mathcal{S}\mathcal{L}_{\Sigma}^{\kappa}(\gamma, 0,
\alpha, \tilde{p}_{\kappa})\equiv \mathcal{B}\mathcal{S}\mathcal{L}_{\Sigma}^{\kappa}(\gamma, \alpha, \tilde{p}_{\kappa}).$ A
function $f\in \Sigma$ of the form 
\begin{equation*}
f(z) = z + \sum\limits_{n=2}^{\infty} a_{n} z^{n}, 
\end{equation*}
is said to be in $\mathcal{B}\mathcal{S}\mathcal{L}_{\Sigma}^{\kappa}(\gamma, \alpha, \tilde{p}_{\kappa}),$ if the
following conditions 
\begin{equation*}
1+\frac{1}{\gamma}\left((1-\alpha)\frac{f(z)}{z}+\alpha f^{\prime
}(z)-1\right) \prec \widetilde{p_{\kappa}(z)}=\dfrac{1+\tau^{2}_{\kappa}z^{2}}{1-\kappa\tau_{\kappa} z - \tau^{2}_{\kappa}z^{2}},\;z\in\mathbb{D} 
\end{equation*}
and for $g(w)=f^{-1}(w)$ 
\begin{equation*}
1+\frac{1}{\gamma}\left((1-\alpha)\frac{g(w)}{w}+\alpha g^{\prime
}(w)-1\right)
\prec \widetilde{p_{\kappa}(w)}
=\dfrac{1+\tau^{2}_{\kappa}w^{2}}{1-\kappa\tau_{\kappa} w - \tau^{2}_{\kappa}w^{2}},\;w\in\mathbb{D}
\end{equation*}
hold, where
$\tau_{\kappa} = \dfrac{\kappa-\sqrt{\kappa^{2}+4}}{2}.$

\item For $\lambda=0$ and $\alpha=1,$ we have the class $\mathcal{W}\mathcal{S}\mathcal{L}_{\Sigma}^{\kappa}(\gamma, 0, 1, \tilde{p}_{\kappa})\equiv \mathcal{H}\mathcal{S}\mathcal{L}_{\Sigma}^{\kappa}(\gamma, \tilde{p}_{\kappa}).$
A function $f\in \Sigma$ of the form 
\begin{equation*}
f(z) = z + \sum\limits_{n=2}^{\infty} a_{n} z^{n}, 
\end{equation*}
is said to be in $\mathcal{H}\mathcal{S}\mathcal{L}_{\Sigma}^{\kappa}(\gamma, \tilde{p}_{\kappa}),$ if the following
conditions 
\begin{equation*}
1+\frac{1}{\gamma}\left(f^{\prime }(z)-1\right) \prec \widetilde{p_{\kappa}(z)}=\dfrac{1+\tau^{2}_{\kappa}z^{2}}{1-\kappa\tau_{\kappa} z - \tau^{2}_{\kappa}z^{2}},\;z\in\mathbb{D} 
\end{equation*}
and for $g(w)=f^{-1}(w)$ 
\begin{equation*}
1+\frac{1}{\gamma}\left(g^{\prime }(w)-1\right) \prec 
\widetilde{p_{\kappa}(w)}
=\dfrac{1+\tau^{2}_{\kappa}w^{2}}{1-\kappa\tau_{\kappa} w - \tau^{2}_{\kappa}w^{2}},\;w\in\mathbb{D}
\end{equation*}
hold, where
$\tau_{\kappa} = \dfrac{\kappa-\sqrt{\kappa^{2}+4}}{2}.$
\end{enumerate}
\begin{definition}
	\label{def1.2} A function $f\in \Sigma$ of the form \eqref{Int-e1}
	belongs to the class $\mathcal{R}\mathcal{S}\mathcal{L}_{\Sigma}^{\kappa}(\gamma, \lambda, \tilde{p}_{\kappa}),$ $\gamma\in \mathbb{C}\backslash\{0\}$ and $\lambda \geq 0,$  if the following conditions are satisfied: 
	\begin{equation}  \label{C1-e1}
	1+\frac{1}{\gamma}\left(\frac{z^{1-\lambda}f^{\prime}(z)}{(f(z))^{1-\lambda}}%
	-1\right) \prec \widetilde{p_{\kappa}(z)}=\dfrac{1+\tau^{2}_{\kappa}z^{2}}{1-\kappa\tau_{\kappa} z - \tau^{2}_{\kappa}z^{2}},\;z\in\mathbb{D} 
	\end{equation}
	and for $g(w)=f^{-1}(w)$ 
	\begin{equation}  \label{C1-e2}
	1+\frac{1}{\gamma}\left(\frac{w^{1-\lambda}g^{\prime}(w)}{(g(w))^{1-\lambda}}%
	-1\right) 
	\widetilde{p_{\kappa}(w)}
	=\dfrac{1+\tau^{2}_{\kappa}w^{2}}{1-\kappa\tau_{\kappa} w - \tau^{2}_{\kappa}w^{2}},\;w\in\mathbb{D}
	\end{equation}
	where
	$\tau_{\kappa} = \dfrac{\kappa-\sqrt{\kappa^{2}+4}}{2}.$
\end{definition}

\begin{enumerate}
	\item For $\lambda=0,$ we let the class $\mathcal{R}\mathcal{S}\mathcal{L}_{\Sigma}^{\kappa}(\gamma,
	0, \tilde{p}_{\kappa})\equiv \mathcal{S}\mathcal{L}_{\Sigma}^{\kappa}(\gamma, \tilde{p}_{\kappa}).$ A function $f\in
	\Sigma$ of the form \eqref{Int-e1}
	is said to be in $\mathcal{S}\mathcal{L}_{\Sigma}^{\kappa}(\gamma,\; \tilde{p}_{\kappa}),$ if the following
	conditions 
	\begin{equation*}
	1+\frac{1}{\gamma}\left(\frac{zf^{\prime }(z)}{f(z)}-1\right) 
	\prec 
	\widetilde{p_{\kappa}(z)}=\dfrac{1+\tau^{2}_{\kappa}z^{2}}{1-\kappa\tau_{\kappa} z - \tau^{2}_{\kappa}z^{2}},\;z\in\mathbb{D} 
	\end{equation*}
	and for $g(w)=f^{-1}(w)$ 
	\begin{equation*}
	1+\frac{1}{\gamma}\left(\frac{wg^{\prime }(w)}{g(w)}-1\right) 
	\prec 
	\widetilde{p_{\kappa}(w)}
	=\dfrac{1+\tau^{2}_{\kappa}w^{2}}{1-\kappa\tau_{\kappa} w - \tau^{2}_{\kappa}w^{2}},\;w\in\mathbb{D}
	\end{equation*}
	hold, where
	$\tau_{\kappa} = \dfrac{\kappa-\sqrt{\kappa^{2}+4}}{2}.$
	
	\begin{remark}
		For $\gamma=1$  the class $\mathcal{S}\mathcal{L}_{\Sigma}^{\kappa}(1, \tilde{p}_{\kappa})\equiv \mathcal{S}\mathcal{L}_{\Sigma}^{\kappa}(\tilde{p}_{\kappa})$ was introduced and studied  G\"{u}ney et al. \cite{HOG-GMS-JS-2019-CFS-kF} and for $\kappa=1$ the class was studied by G\"{u}ney et al.  \cite{HOG-GMS-JS-Fib-2018}.
	\end{remark}
	
	\item For $\lambda=1,$ we have $\mathcal{R}\mathcal{S}\mathcal{L}_{\Sigma}^{\kappa}(\gamma,
	1, \tilde{p}_{\kappa})\equiv \mathcal{H}\mathcal{S}\mathcal{L}_{\Sigma}^{\kappa}(\gamma, \tilde{p}_{\kappa}).$ 
\end{enumerate}
\begin{definition}
	\label{def1.3} A function $f\in \Sigma$ of the form 
	\begin{equation*}
	f(z) = z + \sum\limits_{n=2}^{\infty} a_{n} z^{n}, 
	\end{equation*}
	belongs to the class $\mathcal{S}\mathcal{L}\mathcal{B}_{\Sigma}^{\kappa}(\lambda; \tilde{p}_{\kappa}),$  $\lambda \geq 1,$ if the following conditions are satisfied: 
	\begin{equation}  \label{C1-e1}
	\frac{z\left[f^{\prime}(z)\right]^{\lambda}}{f(z)} 
	\prec 
	\widetilde{p_{\kappa}(z)}=\dfrac{1+\tau^{2}_{\kappa}z^{2}}{1-\kappa\tau_{\kappa} z - \tau^{2}_{\kappa}z^{2}},\;z\in\mathbb{D} 
	\end{equation}
	and for $g(w)=f^{-1}(w)$ 
	\begin{equation}  \label{C1-e2}
	\frac{w\left[g^{\prime}(w)\right]^{\lambda}}{g(w)} 
	\prec 
	\widetilde{p_{\kappa}(w)}
	=\dfrac{1+\tau^{2}_{\kappa}w^{2}}{1-\kappa\tau_{\kappa} w - \tau^{2}_{\kappa}w^{2}},\;w\in\mathbb{D},
	\end{equation}
	where
	$\tau_{\kappa} = \dfrac{\kappa-\sqrt{\kappa^{2}+4}}{2}.$
\end{definition}

\begin{enumerate}
	\item For $\lambda=1,$ we have the class $\mathcal{S}\mathcal{L}\mathcal{B}_{\Sigma}^{\kappa}(1;\tilde{p}_{\kappa})\equiv \mathcal{S}\mathcal{L}_{\Sigma}^{\kappa}(\tilde{p}_{\kappa}).$ A function $f\in
	\Sigma$ of the form 
	\begin{equation*}
	f(z) = z + \sum\limits_{n=2}^{\infty} a_{n} z^{n}, 
	\end{equation*}
	is said to be in $\mathcal{S}\mathcal{L}_{\Sigma}^{\kappa}(\tilde{p}_{\kappa}),$ if the following
	conditions 
	\begin{equation*}
	\frac{zf^{\prime}(z)}{f(z)} \prec \widetilde{p_{\kappa}(z)}=\dfrac{1+\tau^{2}_{\kappa}z^{2}}{1-\kappa\tau_{\kappa} z - \tau^{2}_{\kappa}z^{2}},\;z\in\mathbb{D} 
	\end{equation*}
	and for $g(w)=f^{-1}(w)$ 
	\begin{equation*}
	\frac{wg^{\prime}(w)}{g(w)} 
	\prec 
	\widetilde{p_{\kappa}(w)}
	=\dfrac{1+\tau^{2}_{\kappa}w^{2}}{1-\kappa\tau_{\kappa} w - \tau^{2}_{\kappa}w^{2}},\;w\in\mathbb{D},
	\end{equation*}
	hold, where
	$\tau_{\kappa} = \dfrac{\kappa-\sqrt{\kappa^{2}+4}}{2}.$
\end{enumerate}
\begin{definition}
	\label{def1.4} A function $f\in \Sigma$ of the form 
	\begin{equation*}
	f(z) = z + \sum\limits_{n=2}^{\infty} a_{n} z^{n}, 
	\end{equation*}
	belongs to the class $\mathcal{P}\mathcal{S}\mathcal{L}_{\Sigma}^{\kappa}(\lambda;\tilde{p}_{\kappa}),$  $0 \leq \lambda \leq 1,$ if the following conditions are satisfied: 
	\begin{equation}  \label{C1-e1}
	\frac{zf^{\prime}(z)+\lambda z^{2}f^{\prime\prime}(z)}{(1-\lambda)f(z)+\lambda zf^{\prime}(z)} 
	\prec 
	\widetilde{p_{\kappa}(z)}=\dfrac{1+\tau^{2}_{\kappa}z^{2}}{1-\kappa\tau_{\kappa} z - \tau^{2}_{\kappa}z^{2}},\;z\in\mathbb{D} 
	\end{equation}
	and for $g(w)=f^{-1}(w)$ 
	\begin{equation}  \label{C1-e2}
	\frac{wf^{\prime}(w)+\lambda w^{2}g^{\prime\prime}(w)}{(1-\lambda)g(w)+\lambda wg^{\prime}(w)} 
	\prec 
	\widetilde{p_{\kappa}(w)}
	=\dfrac{1+\tau^{2}_{\kappa}w^{2}}{1-\kappa\tau_{\kappa} w - \tau^{2}_{\kappa}w^{2}},\;w\in\mathbb{D},
	\end{equation}
	where
	$\tau_{\kappa} = \dfrac{\kappa-\sqrt{\kappa^{2}+4}}{2}.$
\end{definition}

\begin{enumerate}
	\item For $\lambda=0,$ we have the class $\mathcal{P}\mathcal{S}\mathcal{L}_{\Sigma}^{\kappa}(0;\tilde{p}_{\kappa})\equiv \mathcal{S}\mathcal{L}_{\Sigma}^{\kappa}(\tilde{p}_{\kappa}).$ 

	\item For $\lambda=1,$ we have the class $\mathcal{P}\mathcal{S}\mathcal{L}_{\Sigma}^{\kappa}(1;\tilde{p}_{\kappa})\equiv \mathcal{K}\mathcal{S}\mathcal{L}_{\Sigma}^{\kappa}(\tilde{p}_{\kappa}).$ A function $f\in
	\Sigma$ of the form 
	\begin{equation*}
	f(z) = z + \sum\limits_{n=2}^{\infty} a_{n} z^{n}, 
	\end{equation*}
	is said to be in $\mathcal{K}\mathcal{S}\mathcal{L}_{\Sigma}^{\kappa}(\tilde{p}_{\kappa}),$ if the following
	conditions 
	\begin{equation*}
	1 + \frac{z^{2}f^{\prime\prime}(z)}{f^{\prime}(z)} 
	\prec 
	\widetilde{p_{\kappa}(z)}=\dfrac{1+\tau^{2}_{\kappa}z^{2}}{1-\kappa\tau_{\kappa} z - \tau^{2}_{\kappa}z^{2}},\;z\in\mathbb{D} 
	\end{equation*}
	and for $g(w)=f^{-1}(w)$ 
	\begin{equation*}
	1 + \frac{w^{2}g^{\prime\prime}(w)}{g^{\prime}(w)} 
	\prec 
	\widetilde{p_{\kappa}(w)}
	=\dfrac{1+\tau^{2}_{\kappa}w^{2}}{1-\kappa\tau_{\kappa} w - \tau^{2}_{\kappa}w^{2}},\;w\in\mathbb{D},
	\end{equation*}
	hold, where
	$\tau_{\kappa} = \dfrac{\kappa-\sqrt{\kappa^{2}+4}}{2}.$
	\begin{remark}
		For $\gamma=0,$ $\mathcal{P}\mathcal{S}\mathcal{L}_{\Sigma}^{\kappa}(0, \tilde{p}_{\kappa})\equiv \mathcal{S}\mathcal{L}_{\Sigma}^{\kappa}(\tilde{p}_{\kappa})$ and $\gamma=1,$  $\mathcal{P}\mathcal{S}\mathcal{L}_{\Sigma}^{\kappa}(1, \tilde{p}_{\kappa})\equiv \mathcal{K}\mathcal{S}\mathcal{L}_{\Sigma}^{\kappa}(\tilde{p}_{\kappa})$ the classes were introduced and studied  G\"{u}ney et al. \cite{HOG-GMS-JS-2019-CFS-kF} and for $\kappa=1$ the class was studied by G\"{u}ney et al. \cite{HOG-GMS-JS-Fib-2018}.
	\end{remark}
\end{enumerate}

In order to prove our results for the function in the classes $\mathcal{W}\mathcal{S}\mathcal{L}
_{\Sigma}^{\kappa}(\gamma, \lambda, \alpha, \tilde{p}_{\kappa}),$   $\mathcal{S}\mathcal{L}\mathcal{B}_{\Sigma}^{\kappa}(\lambda;\tilde{p}_{\kappa})$ and $\mathcal{P}\mathcal{S}\mathcal{L}_{\Sigma}^{\kappa}(\lambda;\tilde{p}_{\kappa})$, we need the following lemma. 

\begin{lemma}
	\cite{Pom}\label{lem-pom} If $p\in \mathcal{P},$ then $|p_i|\leqq 2$ for
	each $i,$ where $\mathcal{P}$ is the family of all functions $p,$ analytic
	in $\mathbb{D},$ for which
	\begin{equation*}
	\Re\{p(z)\}>0 \qquad z \in \mathbb{D},
	\end{equation*}
	where
	\begin{equation*}
	p(z)=1+p_1z+p_2z^2+\cdots \qquad z \in \mathbb{D}.
	\end{equation*}
\end{lemma}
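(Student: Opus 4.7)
My plan is to derive the estimate from the Riesz--Herglotz integral representation of the Carath\'eodory class $\mathcal{P}$, which reduces the coefficient bound to a trivial measure-theoretic inequality.

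The first step is to invoke the representation theorem: every $p\in\mathcal{P}$ with $p(0)=1$ can be written as
\begin{equation*}
p(z)=\int_{-\pi}^{\pi}\frac{e^{it}+z}{e^{it}-z}\,d\mu(t),
\end{equation*}
for some Borel probability measure $\mu$ on $[-\pi,\pi]$. This is established by identifying $\Re p(re^{i\theta})$ as the Poisson integral of a positive measure obtained as a weak-$\ast$ (Helly-selection) limit of the absolutely continuous measures $\Re p(re^{i\theta})\,d\theta/(2\pi)$ as $r\to 1^-$, and then using the fact that an analytic function on $\mathbb{D}$ is determined up to an imaginary constant by its real part, with the normalisation $p(0)=1$ pinning the constant down.

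The second step is a short computation. Expanding the Herglotz kernel as the geometric series
\begin{equation*}
\frac{e^{it}+z}{e^{it}-z}=1+2\sum_{n=1}^{\infty}e^{-int}z^{n},
\end{equation*}
which converges uniformly on compact subsets of $\mathbb{D}$, and interchanging integration with summation (justified since $\mu$ has finite total mass), one reads off $p_{n}=2\int_{-\pi}^{\pi}e^{-int}\,d\mu(t)$ for each $n\ge 1$. The triangle inequality together with $\mu([-\pi,\pi])=1$ then gives $|p_{n}|\le 2$ at once.

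The only genuine obstacle is the Herglotz representation itself; the rest is bookkeeping. A self-contained alternative that sidesteps the measure-theoretic machinery is the Carath\'eodory--Toeplitz criterion, which characterises $\mathcal{P}$ (after the normalisation rescaling $p_{0}=2$) by positive semidefiniteness of the Toeplitz matrices $(p_{j-k})_{j,k=0}^{n}$ with $p_{-k}=\overline{p_{k}}$; the $2\times 2$ principal minor then immediately forces $4-|p_{n}|^{2}\ge 0$, yielding the stated bound by an entirely algebraic route.
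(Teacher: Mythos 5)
Your argument is correct. Note, however, that the paper does not prove this lemma at all: it is quoted verbatim from Pommerenke's book \cite{Pom} and used as a black box, so there is no in-paper proof to compare against. What you have written is essentially the classical proof found in that reference (and in Duren): the Riesz--Herglotz representation with a probability measure, the expansion $\frac{e^{it}+z}{e^{it}-z}=1+2\sum_{n\ge1}e^{-int}z^{n}$, and the triangle inequality giving $|p_n|\le 2\,\mu([-\pi,\pi])=2$. All steps check out, including the normalisation $p(0)=1$ that pins $\mu$ down as a probability measure, and your Carath\'eodory--Toeplitz alternative is also valid (the $2\times2$ principal minor on indices $0$ and $n$ gives $4-|p_n|^{2}\ge 0$), though its proof usually runs through the same Herglotz machinery, so it is not really more elementary. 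An even lighter route, worth knowing, is to write $p=(1+w)/(1-w)$ with $w$ a Schwarz function and estimate coefficients of $w$ directly; but for the purposes of this paper the citation suffices.
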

In this investigation, we find the estimates for the coefficients $|a_2|$
and $|a_3|$ for functions in the subclasses $\mathcal{W}\mathcal{S}\mathcal{L}_{\Sigma}^{\kappa}(\gamma,
\lambda, \alpha, \tilde{p}_{\kappa})$,  $\mathcal{R}\mathcal{S}\mathcal{L}_{\Sigma}^{\kappa}(\gamma, \lambda, \tilde{p}_{\kappa}),$ $\mathcal{S}\mathcal{L}\mathcal{B}_{\Sigma}^{\kappa}(\lambda;\tilde{p}_{\kappa})$ and $\mathcal{P}\mathcal{S}\mathcal{L}_{\Sigma}^{\kappa}(\lambda;\tilde{p}_{\kappa})$. Further, Fekete-Szeg\"{o} problems of the aforementioned classes. 

\section{Initial Coefficient Estimates and Fekete-Szeg\"{o} Inequalities}

In the following theorem, we obtain coefficient estimates for functions in
the class $f\in\mathcal{W}\mathcal{S}\mathcal{L}_{\Sigma}^{\kappa}(\gamma, \lambda, \alpha, \tilde{p}_{\kappa}).$

\begin{theorem}
\label{Th1} Let $f(z)=z+\sum\limits_{n=2}^{\infty}a_{n}z^{n}$ be in the
class $\mathcal{W}\mathcal{S}\mathcal{L}_{\Sigma}^{\kappa}(\gamma, \lambda, \alpha, \tilde{p}_{\kappa})$. Then 
\begin{align*}
\left|a_{2}\right| &\leq \dfrac{\left|\gamma\right|\left|\kappa\tau_{\kappa}\right|\sqrt{\kappa}}{\sqrt{\gamma \kappa^{2}\tau_{\kappa}\left(1+2\alpha + 2\lambda\right)+(\kappa-(\kappa^{2}+2)\tau_{\kappa})(1+\alpha)^{2}}},
\end{align*}
\begin{align*}
\left|a_3\right|&\leq \dfrac{\left|\gamma\right|\left|\kappa\tau_{\kappa}\right|(\kappa-(\kappa^{2}+2)\tau_{\kappa})(1+\alpha)^{2}}{(1+2\alpha +2\lambda)\left[\gamma\kappa^{2}\tau_{\kappa}(1+2\alpha +2\lambda)+(\kappa-(\kappa^{2}+2)\tau_{\kappa})(1+\alpha)^{2}\right]}
\end{align*}
and
\begin{eqnarray*}
\left|a_{3}-\mu a_{2}^{2}\right|
\leq \left\{
\begin{array}{ll}
\dfrac{\gamma\left|\kappa\tau_{\kappa}\right|}{\left(1+2\alpha + 2\lambda\right)}; \\ 
\qquad\qquad\qquad 
0 \leq \left|\mu-1\right|\leq \dfrac{\gamma\kappa^{2}\tau_{\kappa}(1+2\alpha+2\lambda) + (\kappa-(\kappa^{2}+2)\tau_{\kappa})(1+\alpha)^{2}}{\gamma\kappa^{2}\tau_{\kappa}\left(1+2\alpha + 2\lambda\right)}\\
\dfrac{|1-\mu|\gamma^{2}\kappa^{3}\tau_{\kappa}^{2}}{{\gamma\kappa^{2}\tau_{\kappa}(1+2\alpha+2\lambda) + (\kappa-(\kappa^{2}+2)\tau_{\kappa})(1+\alpha)^{2}}}; \\
\qquad \qquad \qquad
\left|\mu-1\right|\geq \dfrac{\gamma\kappa^{2}\tau_{\kappa}(1+2\alpha+2\lambda) + (\kappa-(\kappa^{2}+2)\tau_{\kappa})(1+\alpha)^{2}}{\gamma\kappa^{2}\tau_{\kappa}\left(1+2\alpha + 2\lambda\right)}.
\end{array}		
\right.
\end{eqnarray*}
\end{theorem}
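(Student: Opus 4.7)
The plan is the standard coefficient-comparison technique for bi-univalent classes. I would first rewrite the subordinations in Definition \ref{def1.1} as equalities $1+\tfrac{1}{\gamma}\bigl((1-\alpha+2\lambda)\tfrac{f(z)}{z}+(\alpha-2\lambda)f'(z)+\lambda zf''(z)-1\bigr)=\tilde{p}_{\kappa}(u(z))$, together with the analogous identity for $g=f^{-1}$ involving a Schwarz function $v(w)$. I would then pass to the Carath\'eodory pair $p(z)=\tfrac{1+u(z)}{1-u(z)}=1+p_1z+p_2z^2+\cdots$ and $q(w)=\tfrac{1+v(w)}{1-v(w)}=1+q_1w+q_2w^2+\cdots$, for which Lemma \ref{lem-pom} yields $|p_i|,|q_i|\le 2$, and invoke the standard expansion $u(z)=\tfrac{p_1}{2}z+\tfrac{1}{2}(p_2-\tfrac{p_1^2}{2})z^2+\cdots$ together with its analogue for $v$.

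Next I expand both sides through the $z^2$ (respectively $w^2$) term. A short calculation with $f(z)=z+a_2z^2+a_3z^3+\cdots$ gives the left-hand side of Definition \ref{def1.1} as $1+\tfrac{1+\alpha}{\gamma}a_2z+\tfrac{1+2\alpha+2\lambda}{\gamma}a_3z^2+\cdots$, and using $g(w)=w-a_2w^2+(2a_2^2-a_3)w^3+\cdots$ the corresponding expansion for $g$ just replaces $a_2\mapsto-a_2$ and $a_3\mapsto 2a_2^2-a_3$. For the right-hand side I use the coefficients $\tilde{p}_{\kappa,1}=\kappa\tau_{\kappa}$, $\tilde{p}_{\kappa,2}=(\kappa^2+2)\tau_{\kappa}^2$ recorded in the introduction and compose to obtain $\tilde{p}_{\kappa}(u(z))=1+\tfrac{\kappa\tau_{\kappa}}{2}p_1z+\bigl(\tfrac{\kappa\tau_{\kappa}}{2}p_2+\tfrac{(\kappa^2+2)\tau_{\kappa}^2-\kappa\tau_{\kappa}}{4}p_1^2\bigr)z^2+\cdots$, with the analogous expression for $\tilde{p}_{\kappa}(v(w))$. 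Equating coefficients at $z$, $z^2$, $w$, $w^2$ produces four scalar relations linking $a_2, a_3, p_1, p_2, q_1, q_2$.

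The first-order pair forces $p_1=-q_1$ and $a_2=\tfrac{\gamma\kappa\tau_{\kappa}p_1}{2(1+\alpha)}$. Adding the two second-order relations eliminates $a_3$; after substituting $p_1^2=\tfrac{4(1+\alpha)^2a_2^2}{\gamma^2\kappa^2\tau_{\kappa}^2}$ and using $(\kappa^2+2)\tau_{\kappa}^2-\kappa\tau_{\kappa}=-\tau_{\kappa}(\kappa-(\kappa^2+2)\tau_{\kappa})$, I solve for $a_2^2$ and apply $|p_2+q_2|\le 4$ to obtain the stated estimate for $|a_2|$. Subtracting the two second-order relations isolates $a_3-a_2^2=\tfrac{\gamma\kappa\tau_{\kappa}(p_2-q_2)}{4(1+2\alpha+2\lambda)}$, and combining this with the $|a_2|^2$-bound yields the estimate for $|a_3|$. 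For the Fekete-Szeg\"{o} functional I would write $a_3-\mu a_2^2=(1-\mu)a_2^2+(a_3-a_2^2)$, substitute both closed forms, and reorganise the result into the shape $\tfrac{\gamma\kappa\tau_{\kappa}}{4}\bigl[(h(\mu)+k)p_2+(h(\mu)-k)q_2\bigr]$ where $k=\tfrac{1}{1+2\alpha+2\lambda}$ and $h(\mu)=\tfrac{(1-\mu)\gamma\kappa^2\tau_{\kappa}}{\gamma\kappa^2\tau_{\kappa}(1+2\alpha+2\lambda)+(\kappa-(\kappa^2+2)\tau_{\kappa})(1+\alpha)^2}$; applying $|p_j|,|q_j|\le 2$ together with the elementary identity $|h+k|+|h-k|=2\max(|h|,|k|)$ gives the two branches, separated precisely by $|h(\mu)|=k$, a condition which translates into the stated threshold on $|\mu-1|$. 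The main technical obstacle is the careful second-order composition of $\tilde{p}_{\kappa}\circ u$ and the algebra required to rearrange the $a_2^2$-equation into the clean denominator $\gamma\kappa^2\tau_{\kappa}(1+2\alpha+2\lambda)+(\kappa-(\kappa^2+2)\tau_{\kappa})(1+\alpha)^2$; once this is in hand the remainder is routine bookkeeping.
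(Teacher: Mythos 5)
Your proposal is correct and follows essentially the same route as the paper: the same subordination-to-Carath\'eodory reduction, the same four coefficient relations (the paper's (2.6)--(2.9) with $c_i,d_i$ in place of your $p_i,q_i$), the same add/subtract elimination yielding the closed forms for $a_2^2$ and $a_3-a_2^2$, and the same Zaprawa-style $h(\mu)$ decomposition with the $\max(|h|,|k|)$ dichotomy for the Fekete--Szeg\H{o} bound. The only differences are cosmetic normalizations of $h(\mu)$.
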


\begin{proof}
Since $f\in\mathcal{W}\mathcal{S}\mathcal{L}_{\Sigma}^{\kappa}(\gamma, \lambda, \alpha, \tilde{p}_{\kappa})$, from the
Definition \ref{def1.1} we have 
\begin{equation}  \label{2.2}
1+\frac{1}{\gamma}\left((1-\alpha+2\lambda)\frac{f(z)}{z}+(\alpha-2%
\lambda)f^{\prime }(z)+\lambda zf^{\prime \prime }(z)-1\right) 
= \widetilde{p_{\kappa}(u(z))}
\end{equation}
and 
\begin{equation}  \label{2.3}
1+\frac{1}{\gamma}\left((1-\alpha+2\lambda)\frac{g(w)}{w}+(\alpha-2%
\lambda)g^{\prime }(w)+\lambda wg^{\prime \prime }(w)-1\right)
=\widetilde{p_{\kappa}(v(w))},
\end{equation}
where $z,\; w \in \mathbb{D}$ and $g=f^{-1}$. Using the fact the function $p(z)$ of the form  
\[
p(z) = 1+p_{1}z+p_{2}z^{2}+\ldots
\]
and 
$p\prec \tilde{p}_{\kappa}.$ Then there exists an analytic function $u$ such that 
$\left|u(z)\right|<1$ in $\mathbb{D}$ and $p(z)=\tilde{p}_{\kappa}(u(z)).$ Therefore,
define the function 
\begin{equation*}
h(z) = \dfrac{1+u(z)}{1-u(z)} = 1+c_{1}z+c_{2}z^{2}+\ldots
\end{equation*}
is in the class $\mathcal{P}.$ It follows that
\begin{equation*}
u(z) =\dfrac{h(z)-1}{h(z)+1} = \dfrac{c_{1}}{2}z + \left(c_{2}-\dfrac{c_{1}^{2}}{2}\right)\dfrac{z^{2}}{2}
+ \left(c_{3}-c_{1}c_{2}+\dfrac{c_{1}^{3}}{4}\right)\dfrac{z^{3}}{2}+\ldots
\end{equation*}
and 
\begin{eqnarray}
\tilde{p}_{\kappa}(u(z)) &=& 1+\tilde{p}_{\kappa,\; 1}\left(\dfrac{c_{1}}{2}z + \left(c_{2}-\dfrac{c_{1}^{2}}{2}\right)\dfrac{z^{2}}{2}
+ \left(c_{3}-c_{1}c_{2}+\dfrac{c_{1}^{3}}{4}\right)\dfrac{z^{3}}{2}+\ldots\right)\nonumber\\
&& \quad + \tilde{p}_{\kappa,\;2}\left(\dfrac{c_{1}}{2}z + \left(c_{2}-\dfrac{c_{1}^{2}}{2}\right)\dfrac{z^{2}}{2}
+ \left(c_{3}-c_{1}c_{2}+\dfrac{c_{1}^{3}}{4}\right)\dfrac{z^{3}}{2}+\ldots\right)^{2}\nonumber\\
&& \quad + \tilde{p}_{\kappa,\;3} \left(\dfrac{c_{1}}{2}z + \left(c_{2}-\dfrac{c_{1}^{2}}{2}\right)\dfrac{z^{2}}{2}
+ \left(c_{3}-c_{1}c_{2}+\dfrac{c_{1}^{3}}{4}\right)\dfrac{z^{3}}{2}+\ldots\right)^{3} + \ldots \nonumber\\
&=& 1+ \dfrac{\tilde{p}_{\kappa,\;1}c_{1}}{2}z 
+ \left(\dfrac{1}{2}\left(c_{2}-\dfrac{c_{1}^{2}}{2}\right)\tilde{p}_{\kappa,\;1}
+\dfrac{c_{1}^{2}}{4}\tilde{p}_{\kappa,\;2}\right)z^{2}\nonumber\\
&& \quad
+ \left(\dfrac{1}{2}\left(c_{3}-c_{1}c_{2}+\dfrac{c_{1}^{3}}{4}\right)\tilde{p}_{\kappa,\;1}
+\dfrac{1}{2}c_{1}\left(c_{2}-\dfrac{c_{1}^{2}}{2}\right)\tilde{p}_{\kappa,\;2}
+\dfrac{c_{1}^{3}}{8}\tilde{p}_{\kappa,\;3}\right)z^{3} \label{2.4}\\
&& \quad + \ldots. \nonumber
\end{eqnarray}
Similarly, there exists an analytic function $v$ such that $\left|v(w)\right|<1$ in $\mathbb{D}$ and $p(w)=\tilde{p}(v(w)).$ Therefore, the function
\begin{equation*}
k(w) = \dfrac{1+v(w)}{1-v(w)} = 1+d_{1}w+d_{2}w^{2}+\ldots
\end{equation*}
is in the class $\mathcal{P}.$ It follows that
\begin{equation*}
v(w) =\dfrac{k(w)-1}{k(w)+1} = \dfrac{d_{1}}{2}w + \left(d_{2}-\dfrac{d_{1}^{2}}{2}\right)\dfrac{w^{2}}{2}
+ \left(d_{3}-d_{1}d_{2}+\dfrac{d_{1}^{3}}{4}\right)\dfrac{w^{3}}{2}+\ldots
\end{equation*}
and 
\begin{eqnarray}
\tilde{p}_{\kappa}(v(w)) &=& 1+\tilde{p}_{\kappa,\;1}\left(\dfrac{d_{1}}{2}w + \left(d_{2}-\dfrac{d_{1}^{2}}{2}\right)\dfrac{w^{2}}{2}
+ \left(d_{3}-d_{1}d_{2}+\dfrac{d_{1}^{3}}{4}\right)\dfrac{w^{3}}{2}+\ldots\right)\nonumber\\
&& \quad + \tilde{p}_{\kappa,\;2}\left(\dfrac{d_{1}}{2}w + \left(d_{2}-\dfrac{d_{1}^{2}}{2}\right)\dfrac{w^{2}}{2}
+ \left(d_{3}-d_{1}d_{2}+\dfrac{d_{1}^{3}}{4}\right)\dfrac{w^{3}}{2}+\ldots\right)^{2}\nonumber\\
&& \quad + \tilde{p}_{\kappa,\;3} \left(\dfrac{d_{1}}{2}w + \left(d_{2}-\dfrac{d_{1}^{2}}{2}\right)\dfrac{w^{2}}{2}
+ \left(d_{3}-d_{1}d_{2}+\dfrac{d_{1}^{3}}{4}\right)\dfrac{w^{3}}{2}+\ldots\right)^{3} + \ldots \nonumber\\
&=& 1+ \dfrac{\tilde{p}_{\kappa,\;1}d_{1}}{2}w 
+ \left(\dfrac{1}{2}\left(d_{2}-\dfrac{d_{1}^{2}}{2}\right)\tilde{p}_{\kappa,\;1}
+\dfrac{d_{1}^{2}}{4}\tilde{p}_{\kappa,\;2}\right)w^{2}\nonumber\\
&& \quad
+ \left(\dfrac{1}{2}\left(d_{3}-d_{1}d_{2}+\dfrac{d_{1}^{3}}{4}\right)\tilde{p}_{\kappa,\;1}
+\dfrac{1}{2}d_{1}\left(d_{2}-\dfrac{d_{1}^{2}}{2}\right)\tilde{p}_{\kappa,\;2}
+\dfrac{d_{1}^{3}}{8}\tilde{p}_{\kappa,\;3}\right)w^{3} \label{2.5}\\
&& \quad + \ldots. \nonumber
\end{eqnarray}
By virtue of \eqref{2.2}, \eqref{2.3}, \eqref{2.4} and \eqref{2.5}, we have 
\begin{equation}  \label{2.6}
\frac{1}{\gamma}(1+\alpha)a_2=\dfrac{c_{1}\kappa\tau_{\kappa}}{2},
\end{equation}
\begin{equation}  \label{2.7}
\frac{a_3}{\gamma}(1+2\alpha+2\lambda)=\dfrac{1}{2}\left(c_{2}-\dfrac{c_{1}^{2}}{2}\right)\kappa\tau_{\kappa}+\dfrac{c_{1}^{2}}{4}(\kappa^{2}+2)\tau^{2}_{\kappa},
\end{equation}

\begin{equation}  \label{2.8}
-\frac{1}{\gamma}(1+\alpha)a_2=\dfrac{d_{1}\kappa\tau_{\kappa}}{2},
\end{equation}

and 
\begin{equation}  \label{2.9}
\frac{(1+2\alpha+2\lambda)}{\gamma}(2a^2_2-a_3) =\dfrac{1}{2}\left(d_{2}-\dfrac{d_{1}^{2}}{2}\right)\kappa\tau_{\kappa}+\dfrac{d_{1}^{2}}{4}(\kappa^{2}+2)\tau^{2}_{\kappa}.
\end{equation}

From \eqref{2.6} and \eqref{2.8}, we obtain
\begin{equation*} 
c_{1}=-d_{1},
\end{equation*}
and
\begin{eqnarray} 
\dfrac{2}{\gamma^{2}}(1+\alpha)^{2} a_{2}^2 &=& \dfrac{(c_{1}^{2} + d_{1}^{2})\kappa^{2}\tau^{2}_{\kappa}}{4}\nonumber\\
a_{2}^2 &=& \dfrac{\gamma^{2}(c_{1}^{2} + d_{1}^{2})\kappa^{2}\tau^{2}_{\kappa}}{8(1+\alpha)^{2}}~.\label{2.10}
\end{eqnarray}
By adding  \eqref{2.7} and \eqref{2.9}, we have 
\begin{eqnarray}
\dfrac{2}{\gamma}\left(1+2\alpha + 2\lambda\right) a_{2}^{2}
= \dfrac{1}{2}\left(c_{2}+d_{2}\right)\kappa\tau_{\kappa} - \dfrac{1}{4}\left(c_{1}^{2}+d_{1}^{2}\right)\kappa\tau_{\kappa} + \dfrac{1}{4}\left(c_{1}^{2}+d_{1}^{2}\right)(\kappa^{2}+2)\tau^{2}_{\kappa}.\label{2.11}
\end{eqnarray}
By substituting  \eqref{2.10} in \eqref{2.11}, we reduce that
\begin{eqnarray}
a_{2}^{2} &=& \dfrac{\gamma^{2}\left(c_{2}+d_{2}\right)\kappa^{3}\tau^{2}_{\kappa}}{4\left[\gamma \kappa^{2}\tau_{\kappa}\left(1+2\alpha + 2\lambda\right)+(\kappa-(\kappa^{2}+2)\tau_{\kappa})(1+\alpha)^{2}\right]}.\label{2.12}
\end{eqnarray}
Now, applying Lemma \ref{lem-pom}, we obtain
\begin{eqnarray}
\left|a_{2}\right| &\leq \dfrac{\left|\gamma\right|\left|\kappa\tau_{\kappa}\right|\sqrt{\kappa}}{\sqrt{\gamma \kappa^{2}\tau_{\kappa}\left(1+2\alpha + 2\lambda\right)+(\kappa-(\kappa^{2}+2)\tau_{\kappa})(1+\alpha)^{2}}}.\label{2.13}
\end{eqnarray}
By subtracting \eqref{2.9} from \eqref{2.7}, we obtain
\begin{eqnarray}
a_{3}= \dfrac{\gamma\left(c_{2}-d_{2}\right)\kappa\tau_{\kappa}}{4\left(1+2\alpha + 2\lambda\right)}  + a_{2}^{2}.\label{2.14}
\end{eqnarray}
Hence by Lemma \ref{lem-pom}, we have 
\begin{eqnarray}
\left|a_{3}\right|&\leq& \dfrac{\left|\gamma\right|\left(\left|c_{2}\right|+\left|d_{2}\right|\right)\left|\kappa\tau_{\kappa}\right|}{4\left(1+2\alpha + 2\lambda\right)}  + \left|a_{2}\right|^{2}\leq  \dfrac{\left|\gamma\right|\left|\kappa\tau_{\kappa}\right|}{\left(1+2\alpha + 2\lambda\right)} + \left|a_{2}\right|^{2}
.\label{2.15}
\end{eqnarray}
Then in view of \eqref{2.13}, we obtain
\begin{eqnarray*}
\left|a_{3}\right|
\leq 
\dfrac{\left|\gamma\right|\left|\kappa\tau_{\kappa}\right|(\kappa-(\kappa^{2}+2)\tau_{\kappa})(1+\alpha)^{2}}{(1+2\alpha +2\lambda)\left[\gamma\kappa^{2}\tau_{\kappa}(1+2\alpha +2\lambda)+(\kappa-(\kappa^{2}+2)\tau_{\kappa})(1+\alpha)^{2}\right]}
\end{eqnarray*}
	From \eqref{2.14}, we have 
	\begin{eqnarray}\label{Th1-Fekete-e1}
	a_{3} -\mu a_{2}^{2} = \dfrac{\gamma\left(c_{2}-d_{2}\right)\kappa\tau_{\kappa}}{4\left(1+2\alpha + 2\lambda\right)} + \left(1-\mu\right)a_{2}^{2}.
	\end{eqnarray}
	By substituting \eqref{2.12} in \eqref{Th1-Fekete-e1}, we have 
	\begin{eqnarray}\label{Th1-Fekete-e2}
	a_{3} -\mu a_{2}^{2} 
	&=& \dfrac{\gamma\left(c_{2}-d_{2}\right)\kappa\tau_{\kappa}}{4\left(1+2\alpha + 2\lambda\right)} + \left(1-\mu\right) \left(\dfrac{\gamma^{2}\left(c_{2}+d_{2}\right)\kappa^{3}\tau^{2}_{\kappa}}{4\left[\gamma \kappa^{2}\tau_{\kappa}\left(1+2\alpha + 2\lambda\right)+(\kappa-(\kappa^{2}+2)\tau_{\kappa})(1+\alpha)^{2}\right]}\right)\nonumber\\
	&=& \left(h(\mu) + \dfrac{\gamma\kappa\left|\tau_{\kappa}\right|}{4\left(1+2\alpha + 2\lambda\right)}\right)c_{2}
		+ \left(h(\mu) - \dfrac{\gamma\kappa\left|\tau_{\kappa}\right|}{4\left(1+2\alpha + 2\lambda\right)}\right)d_{2},
	\end{eqnarray}
	where
	\begin{eqnarray*}
	h(\mu) =\dfrac{\left(1-\mu\right)\gamma^{2}\kappa^{3}\tau^{2}_{\kappa}}{4\left[
			\gamma\kappa^{2}\tau_{\kappa}(1+2\alpha+2\lambda) 
			+ (\kappa-(\kappa^{2}+2)\tau_{\kappa})(1+\alpha)^{2}
			\right]}.	
	\end{eqnarray*}
	Thus by taking modulus of \eqref{Th1-Fekete-e2}, we conclude that
	\begin{eqnarray*}
	\left|a_{3}-\mu a_{2}^{2}\right|
	\leq \left\{
	\begin{array}{ll}
	\dfrac{|\gamma|\left|\kappa\tau_{\kappa}\right|}{\left(1+2\alpha + 2\lambda\right)} &; 
	0 \leq \left|h(\mu)\right|\leq \dfrac{|\gamma|\left|\kappa\tau_{\kappa}\right|}{4\left(1+2\alpha + 2\lambda\right)}\\
	4\left|h(\mu)\right| &; 
	\left|h(\mu)\right|\geq \dfrac{|\gamma|\left|\kappa\tau_{\kappa}\right|}{4\left(1+2\alpha + 2\lambda\right)}.
	\end{array}		
	\right.
	\end{eqnarray*}
\end{proof}
In the following theorem, we obtain coefficient estimates for functions in
the class $f\in\mathcal{R}\mathcal{S}\mathcal{L}_{\Sigma}^{\kappa}(\gamma, \lambda, \tilde{p}_{\kappa})$.

\begin{theorem}
	\label{thm3.1} Let $f(z)=z+\sum\limits_{n=2}^{\infty}a_{n}z^{n}$ be in the
	class $\mathcal{R}\mathcal{S}\mathcal{L}_{\Sigma}{\kappa}(\gamma, \lambda, \tilde{p}_{\kappa})$. Then 
	\begin{align*}
	\left|a_2\right|&\leq 
	\dfrac{\sqrt{2}\left|\gamma\right|\left|\kappa\tau_{\kappa}\right|\sqrt{\kappa}}{\sqrt{\gamma \kappa^{2}\tau_{\kappa}\left(2+\lambda\right)\left(1+\lambda\right)+2(\kappa - (\kappa^{2}+2)\tau_{\kappa})(1+\lambda)^{2}}},
	\end{align*}
	\begin{align*}
	\left|a_3\right|&\leq \dfrac{\left|\gamma\right|\left|\kappa\tau_{\kappa}\right|\left\{\gamma \kappa^{2}\tau_{\kappa} \left(2+\lambda\right)\left(1+\lambda\right)+2(\kappa - (\kappa^{2}+2)\tau_{\kappa})(1+\lambda)^{2}-2\left(2+\lambda\right)\gamma\kappa^{2}\tau_{\kappa}\right\}}{(2+\lambda)\left[\gamma\kappa^{2}\tau_{\kappa}(2+\lambda)\left(1+\lambda\right)+2(\kappa - (\kappa^{2}+2)\tau_{\kappa})(1+\lambda)^{2}\right]}
	\end{align*}
	and
	\begin{eqnarray*}
		\left|a_{3}-\mu a_{2}^{2}\right|
		\leq \left\{
		\begin{array}{ll}
			\dfrac{\left|\gamma\right|\left|\kappa\tau_{\kappa}\right|}{2+\lambda} &; 
			0 \leq \left|\mu-1\right|\leq \dfrac{M}{2\left|\gamma\right|\kappa^{2}\left|\tau_{\kappa}\right|\left(2+\lambda\right)}\\
			\dfrac{2\left|1-\mu\right|\gamma^{2}\kappa^{3}\tau_{\kappa}^{2}}{M} &; 
			\left|\mu-1\right|\geq \dfrac{M}{2\left|\gamma\right|\kappa^{2}\left|\tau_{\kappa}\right|\left(2+\lambda\right)},
		\end{array}		
		\right.
	\end{eqnarray*}
	where
	\begin{eqnarray*}
		M = \gamma\kappa^{2}\tau_{\kappa}\left(2+\lambda\right)\left(1+\lambda\right)+2\left(1+\lambda\right)^{2}\left(\kappa-(\kappa^{2}+2)\tau_{\kappa}\right).
	\end{eqnarray*}
\end{theorem}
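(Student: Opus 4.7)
The plan is to mirror the proof of Theorem \ref{Th1}, with the single technical change being the power-series expansion of the nonlinear operator $\mathcal{L}_\lambda[f](z) := z^{1-\lambda}f'(z)/(f(z))^{1-\lambda}$ in place of the linear differential operator that appeared there. First, from the subordinations \eqref{C1-e1} and \eqref{C1-e2} defining the class, I would introduce Schwarz functions $u$ and $v$ on $\mathbb{D}$ and pass to the associated Carath\'eodory functions $h(z)=(1+u(z))/(1-u(z))=1+c_1z+c_2z^2+\cdots$ and $k(w)=(1+v(w))/(1-v(w))=1+d_1w+d_2w^2+\cdots$, exactly as in \eqref{2.4} and \eqref{2.5}, so that I can read off the Taylor coefficients of $\tilde p_\kappa(u(z))$ and $\tilde p_\kappa(v(w))$ in terms of $c_1,c_2,d_1,d_2$ and the known values $\tilde p_{\kappa,1}=\kappa\tau_\kappa$, $\tilde p_{\kappa,2}=(\kappa^2+2)\tau_\kappa^2$.

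Next, I would compute the expansion of $\mathcal{L}_\lambda[f]$. Writing $f(z)=z+a_2z^2+a_3z^3+\cdots$ and $f(z)/z = 1+a_2z+a_3z^2+\cdots$, the binomial expansion gives
\[
\mathcal{L}_\lambda[f](z) = f'(z)\left(\tfrac{f(z)}{z}\right)^{\lambda-1} = 1+(1+\lambda)a_2\, z + \left[(2+\lambda)a_3 + \tfrac{(\lambda-1)(\lambda+2)}{2}a_2^2\right]z^2+\cdots,
\]
and the analogous expansion for $g=f^{-1}$ using $g(w)=w-a_2w^2+(2a_2^2-a_3)w^3+\cdots$. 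Matching the $z$- and $z^2$-coefficients of the two sides of \eqref{C1-e1}, and the $w$- and $w^2$-coefficients of the two sides of \eqref{C1-e2}, yields four relations that play the role of \eqref{2.6}--\eqref{2.9}, namely
\[
\tfrac{(1+\lambda)}{\gamma}a_2=\tfrac{c_1\kappa\tau_\kappa}{2},\qquad -\tfrac{(1+\lambda)}{\gamma}a_2=\tfrac{d_1\kappa\tau_\kappa}{2},
\]
together with two $z^2$-equations whose left-hand sides are $\tfrac{1}{\gamma}\bigl[(2+\lambda)a_3+\tfrac{(\lambda-1)(\lambda+2)}{2}a_2^2\bigr]$ and $\tfrac{1}{\gamma}\bigl[(2+\lambda)(2a_2^2-a_3)+\tfrac{(\lambda-1)(\lambda+2)}{2}a_2^2\bigr]$, respectively.

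From the first pair I immediately get $c_1=-d_1$ and $a_2^2 = \gamma^2(c_1^2+d_1^2)\kappa^2\tau_\kappa^2/\bigl(8(1+\lambda)^2\bigr)$. Adding the two $z^2$-equations eliminates the $a_3$-contribution and, after substituting this expression for $a_2^2$ on the right and collecting terms using $c_1^2=d_1^2$, produces
\[
a_2^2 = \frac{\gamma^2(c_2+d_2)\kappa^3\tau_\kappa^2}{2\bigl[\gamma\kappa^2\tau_\kappa(2+\lambda)(1+\lambda)+2(1+\lambda)^2(\kappa-(\kappa^2+2)\tau_\kappa)\bigr]} = \frac{\gamma^2(c_2+d_2)\kappa^3\tau_\kappa^2}{2M},
\]
which after applying Lemma \ref{lem-pom} (so $|c_2|+|d_2|\le 4$) gives the stated bound on $|a_2|$. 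Subtracting the two $z^2$-equations isolates $a_3-a_2^2$, so that $a_3 = \gamma(c_2-d_2)\kappa\tau_\kappa/\bigl[2(2+\lambda)\bigr] + a_2^2$; applying Lemma \ref{lem-pom} to the first term and the previously derived estimate to the second yields the claimed bound on $|a_3|$.

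For the Fekete--Szeg\"o estimate I would write
\[
a_3-\mu a_2^2 = \frac{\gamma(c_2-d_2)\kappa\tau_\kappa}{2(2+\lambda)} + (1-\mu)\, a_2^2 = \Bigl(\Psi(\mu)+\tfrac{\gamma\kappa\tau_\kappa}{2(2+\lambda)}\Bigr)c_2 + \Bigl(\Psi(\mu)-\tfrac{\gamma\kappa\tau_\kappa}{2(2+\lambda)}\Bigr)d_2,
\]
with $\Psi(\mu) = (1-\mu)\gamma^2\kappa^3\tau_\kappa^2/(2M)$, then take moduli and split on whether $|\Psi(\mu)|$ is at most or at least $|\gamma\kappa\tau_\kappa|/\bigl[2(2+\lambda)\bigr]$, which translates exactly to the threshold $|\mu-1|\lessgtr M/\bigl(2|\gamma|\kappa^2|\tau_\kappa|(2+\lambda)\bigr)$ in the statement. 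The only real obstacle is keeping the combinatorics of the binomial expansion of $(f(z)/z)^{\lambda-1}$ straight and verifying that the $(\lambda-1)(\lambda+2)a_2^2/2$ terms cancel cleanly when the two $z^2$-equations are added (which they do, because the same correction appears symmetrically in both); once that cancellation is confirmed, everything else reduces to the same algebra carried out in Theorem \ref{Th1}.
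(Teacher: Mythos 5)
Your overall route is exactly the paper's: pass from the subordinations to Carath\'eodory functions $h,k$, expand $z^{1-\lambda}f'(z)/(f(z))^{1-\lambda}$ via the binomial series, compare coefficients, and then add/subtract the two $z^2$-equations. Your expansion is right --- the coefficient $(2+\lambda)a_3+\tfrac{(\lambda-1)(\lambda+2)}{2}a_2^2$ is the same as the paper's $(2+\lambda)\bigl[a_3+(\lambda-1)\tfrac{a_2^2}{2}\bigr]$ in \eqref{3.7} --- and your $a_2^2$ formula and $|a_2|$ bound match \eqref{3.12} and \eqref{3.13}. (One verbal slip: the $\tfrac{(\lambda-1)(\lambda+2)}{2}a_2^2$ terms do \emph{not} cancel when you add the two equations; they cancel when you subtract. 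Upon addition they combine with $2(2+\lambda)a_2^2$ to give $(2+\lambda)(1+\lambda)a_2^2$, which is precisely what produces $M$; your displayed formula already reflects this, so only the justification is misstated.)

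The genuine problem is a factor of $2$ in the subtraction step. Subtracting the two $z^2$-equations gives $\tfrac{2(2+\lambda)}{\gamma}(a_3-a_2^2)$ on the left (the $a_3$ terms combine as $a_3-(-a_3)=2a_3$) and $\tfrac{1}{2}(c_2-d_2)\kappa\tau_\kappa$ on the right, so
\begin{equation*}
a_3-a_2^2=\frac{\gamma(c_2-d_2)\kappa\tau_{\kappa}}{4(2+\lambda)},
\end{equation*}
which is the paper's \eqref{3.14}, not $\gamma(c_2-d_2)\kappa\tau_{\kappa}/\bigl[2(2+\lambda)\bigr]$ as you wrote. With your constant the first term of the $|a_3|$ bound becomes $2|\gamma||\kappa\tau_\kappa|/(2+\lambda)$ rather than $|\gamma||\kappa\tau_\kappa|/(2+\lambda)$, and the Fekete--Szeg\"o comparison $|\Psi(\mu)|\lessgtr|\gamma\kappa\tau_\kappa|/[2(2+\lambda)]$ yields the threshold $M/\bigl[(2+\lambda)|\gamma|\kappa^{2}|\tau_{\kappa}|\bigr]$, not the stated $M/\bigl[2(2+\lambda)|\gamma|\kappa^{2}|\tau_{\kappa}|\bigr]$; so your claim that it ``translates exactly'' to the statement fails. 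Correcting the denominator to $4(2+\lambda)$ throughout repairs everything and reproduces the theorem verbatim.
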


\begin{proof}
	Since $f\in\mathcal{R}\mathcal{S}\mathcal{L}_{\Sigma}^{\kappa}(\gamma, \lambda, \tilde{p}_{\kappa})$, from the
	Definition \ref{def1.2} we have 
	\begin{equation}  \label{3.2}
	1+\frac{1}{\gamma}\left(\frac{z^{1-\lambda}f^{\prime }(z)}{(f(z))^{1-\lambda}%
	}-1\right) = \widetilde{p_{\kappa}(u(z))}
	\end{equation}
	and 
	\begin{equation}  \label{3.3}
	1+\frac{1}{\gamma}\left(\frac{w^{1-\lambda}g^{\prime }(w)}{(g(w))^{1-\lambda}%
	}-1\right) =\widetilde{p_{\kappa}(v(w))}.
	\end{equation}
	By virtue of \eqref{3.2}, \eqref{3.3}, \eqref{2.4} and \eqref{2.5}, we get 
	\begin{equation}  \label{3.6}
	\frac{1}{\gamma}(1+\lambda)a_2=\dfrac{c_{1}\kappa\tau_{\kappa}}{2},
	\end{equation}
	\begin{equation}  \label{3.7}
	\frac{1}{\gamma}(2+\lambda)\left[a_{3}+
	\left(\lambda-1\right)\dfrac{a_{2}^{2}}{2}\right]
	=\dfrac{1}{2}\left(c_{2}-\dfrac{c_{1}^{2}}{2}\right)\kappa\tau_{\kappa}
	+\dfrac{c_{1}^{2}}{4}(\kappa^{2}+2)\tau_{\kappa}^{2},
	\end{equation}
	\begin{equation}  \label{3.8}
	-\frac{1}{\gamma}(1+\lambda)a_2=\dfrac{d_{1}\kappa\tau_{\kappa}}{2}
	\end{equation}
	and 
	\begin{equation}  \label{3.9}
	\frac{1}{\gamma}(2+\lambda)\left[\left(3+\lambda\right)\dfrac{a_{2}^{2}}{2}-a_{3}\right] =\dfrac{1}{2}\left(d_{2}-\dfrac{d_{1}^{2}}{2}\right)\kappa\tau_{\kappa}
	+\dfrac{d_{1}^{2}}{4}(\kappa^{2}+2)\tau_{\kappa}^{2}.
	\end{equation}
	From \eqref{3.6} and \eqref{3.8}, we obtain
	\begin{equation*} 
	c_{1}=-d_{1},
	\end{equation*}
	and
	\begin{eqnarray} 
	\dfrac{2}{\gamma^{2}}(1+\lambda)^{2} a_{2}^2 &=& \dfrac{(c_{1}^{2} + d_{1}^{2})\kappa^{2}\tau_{\kappa}^{2}}{4}\nonumber\\
	a_{2}^2 &=& \dfrac{\gamma^{2}(c_{1}^{2} + d_{1}^{2})\kappa^{2}\tau_{\kappa}^{2}}{8(1+\lambda)^{2}}~.\label{3.10}
	\end{eqnarray}
	By adding  \eqref{3.7} and \eqref{3.9}, we have 
	\begin{eqnarray}
	\dfrac{1}{\gamma}\left(2+\lambda\right)\left(1+\lambda\right) a_{2}^{2}
	= \dfrac{1}{2}\left(c_{2}+d_{2}\right)\kappa\tau_{\kappa} - \dfrac{1}{4}\left(c_{1}^{2}+d_{1}^{2}\right)\kappa\tau_{\kappa} + \dfrac{1}{4}\left(c_{1}^{2}+d_{1}^{2}\right)(\kappa^{2}+2)\tau_{\kappa}^{2}.\label{3.11}
	\end{eqnarray}
	By substituting  \eqref{3.10} in \eqref{3.11}, we reduce that
	\begin{eqnarray}
	a_{2}^{2} &=& \dfrac{\gamma^{2}\left(c_{2}+d_{2}\right)\kappa^{3}\tau_{\kappa}^{2}}{2\left[\gamma \kappa^{2}\tau_{\kappa}\left(2+\lambda\right)\left(1+\lambda\right)+2(\kappa - (\kappa^{2}+2)\tau_{\kappa})(1+\lambda)^{2}\right]}.\label{3.12}
	\end{eqnarray}
	Now, applying Lemma \ref{lem-pom}, we obtain
	\begin{eqnarray}
	\left|a_{2}\right| &\leq& \dfrac{\sqrt{2}\left|\gamma\right|\left|\kappa\tau_{\kappa}\right|\sqrt{\kappa}}{\sqrt{\gamma \kappa^{2}\tau_{\kappa}\left(2+\lambda\right)\left(1+\lambda\right)+2(\kappa - (\kappa^{2}+2)\tau_{\kappa})(1+\lambda)^{2}}}.\label{3.13}
	\end{eqnarray}
	By subtracting \eqref{3.9} from \eqref{3.7}, we obtain
	\begin{eqnarray}
	a_{3}= \dfrac{\gamma\left(c_{2}-d_{2}\right)\kappa\tau_{\kappa}}{4\left(2+\lambda\right)}  + a_{2}^{2}.\label{3.14}
	\end{eqnarray}
	Hence by Lemma \ref{lem-pom}, we have 
	\begin{eqnarray}
	\left|a_{3}\right|&\leq& \dfrac{\left|\gamma\right|\left(\left|c_{2}\right|+\left|d_{2}\right|\right)\left|\kappa\tau_{\kappa}\right|}{4\left(2+\lambda\right)}  + \left|a_{2}\right|^{2}\leq  \dfrac{\left|\gamma\right|\left|\kappa\tau_{\kappa}\right|}{\left(2+\lambda\right)} + \left|a_{2}\right|^{2}
	.\label{3.15}
	\end{eqnarray}
	Then in view of \eqref{3.13}, we obtain
	\begin{eqnarray*}
		\left|a_{3}\right|
		\leq 
		\dfrac{\left|\gamma\right|\left|\kappa\tau_{\kappa}\right|\left\{\gamma \kappa^{2}\tau_{\kappa} \left(2+\lambda\right)\left(1+\lambda\right)+2(\kappa - (\kappa^{2}+2)\tau_{\kappa})(1+\lambda)^{2}-2\left(2+\lambda\right)\gamma\kappa^{2}\tau_{\kappa}\right\}}{(2+\lambda)\left[\gamma\kappa^{2}\tau_{\kappa}(2+\lambda)\left(1+\lambda\right)+2(\kappa - (\kappa^{2}+2)\tau_{\kappa})(1+\lambda)^{2}\right]}.
	\end{eqnarray*}
	From \eqref{3.14}, we have 
	\begin{eqnarray}\label{Th2-Fekete-e1}
	a_{3} -\mu a_{2}^{2} = \dfrac{\gamma\left(c_{2}-d_{2}\right)\kappa\tau_{\kappa}}{4\left(2+\lambda\right)} + \left(1-\mu\right)a_{2}^{2}.
	\end{eqnarray}
	By substituting \eqref{3.12} in \eqref{Th2-Fekete-e1}, we have 
	\begin{eqnarray}\label{Th2-Fekete-e2}
	a_{3} -\mu a_{2}^{2} 
	&=& \dfrac{\gamma\left(c_{2}-d_{2}\right)\kappa\tau_{\kappa}}{4\left(2+\lambda\right)} + \left(1-\mu\right) \left(\dfrac{\gamma^{2}\left(c_{2}+d_{2}\right)\kappa^{3}\tau_{\kappa}^{2}}{2\left[\gamma\kappa^{2}\tau_{\kappa}\left(2+\lambda\right)\left(1+\lambda\right)+2\left(1+\lambda\right)^{2}\left(\kappa-(\kappa^{2}+2)\tau_{\kappa}\right)\right]}\right)\nonumber\\
	&=& \left(h(\mu) + \dfrac{\gamma\kappa\tau_{\kappa}}{4\left(2+\lambda\right)}\right)c_{2}
	+ \left(h(\mu) - \dfrac{\gamma\kappa\tau_{\kappa}}{4\left(2+\lambda\right)}\right)d_{2},
	\end{eqnarray}
	where
	\begin{eqnarray*}
		h(\mu) = \dfrac{\left(1-\mu\right)\gamma^{2}\kappa^{3}\tau_{\kappa}^{2}}{2\left[\gamma\kappa^{2}\tau_{\kappa}\left(2+\lambda\right)\left(1+\lambda\right)+2\left(1+\lambda\right)^{2}\left(\kappa-(\kappa^{2}+2)\tau_{\kappa}\right)\right]}.
	\end{eqnarray*}
	Thus by taking modulus of \eqref{Th2-Fekete-e2}, we conclude that
	\begin{eqnarray*}
		\left|a_{3}-\mu a_{2}^{2}\right|
		\leq \left\{
		\begin{array}{ll}
			\dfrac{\left|\gamma\right|\left|\kappa\tau_{\kappa}\right|}{\left(2+\lambda\right)} &; 
			0 \leq \left|h(\mu)\right|\leq \dfrac{\left|\gamma\right|\left|\kappa\tau_{\kappa}\right|}{4\left(2+\lambda\right)}\\
			4\left|h(\mu)\right| &; 
			\left|h(\mu)\right|\geq \dfrac{\left|\gamma\right|\left|\kappa\tau_{\kappa}\right|}{4\left(2+\lambda\right)}.
		\end{array}		
		\right.
	\end{eqnarray*}
	This gives desired inequality. 
\end{proof}
\begin{theorem}
	\label{thm4.1} Let $f(z)=z+\sum\limits_{n=2}^{\infty}a_{n}z^{n}$ be in the
	class $\mathcal{S}\mathcal{L}\mathcal{B}_{\Sigma}^{\kappa}(\lambda;\tilde{p}_{\kappa})$. Then 
	\begin{align*}
	\left|a_2\right|&\leq \dfrac{\left|\kappa\tau_{\kappa}\right|}{\sqrt{\left[\lambda\left(2\lambda-1\right)\kappa^{2}\tau_{\kappa} 
			+ \left(\kappa -  (\kappa^{2}+2)\tau_{\kappa}\right)
			\left(2\lambda-1\right)^{2}\right]}},
	\end{align*}
	\begin{align*}
	\left|a_3\right|&\leq \dfrac{\left|\kappa\tau_{\kappa}\right|\left[ \left(\kappa -  (\kappa^{2}+2)\tau_{\kappa}\right)(2\lambda-1)^{2}+\left(2\lambda^{2}-4\lambda+1\right)\kappa^{2}\tau_{\kappa}\right\}}{(3\lambda-1)\left[\lambda\left(2\lambda-1\right)\kappa^{2}\tau_{\kappa} 
		+ \left(\kappa -  (\kappa^{2}+2)\tau_{\kappa}\right)
		\left(2\lambda-1\right)^{2}\right]}
	\end{align*}
	and
	\begin{eqnarray*}
		\left|a_{3}-\mu a_{2}^{2}\right|
		\leq \left\{
		\begin{array}{ll}
			\dfrac{\left|\kappa\tau_{\kappa}\right|}{3\lambda-1};
			\\ \qquad\qquad\qquad 
			0 \leq \left|\mu-1\right|\leq \dfrac{\left[\lambda\left(2\lambda-1\right)\kappa^{2}\tau_{\kappa} 
				+ \left(\kappa -  (\kappa^{2}+2)\tau_{\kappa}\right)
				\left(2\lambda-1\right)^{2}\right]}{\kappa^{2}\tau_{\kappa}\left(3\lambda-1\right)}\\
			\dfrac{\left|1-\mu\right|\kappa^{3}\tau_{\kappa}^{2}}{\left[\lambda\left(2\lambda-1\right)\kappa^{2}\tau_{\kappa} 
				+ \left(\kappa -  (\kappa^{2}+2)\tau_{\kappa}\right)
				\left(2\lambda-1\right)^{2}\right]};
			\\\qquad\qquad\qquad
			\left|\mu-1\right|\geq \dfrac{\left[\lambda\left(2\lambda-1\right)\kappa^{2}\tau_{\kappa} 
				+ \left(\kappa -  (\kappa^{2}+2)\tau_{\kappa}\right)
				\left(2\lambda-1\right)^{2}\right]}{\left|\kappa^{3}\tau_{\kappa}\right|\left(3\lambda-1\right)}.
		\end{array}		
		\right.
	\end{eqnarray*}
\end{theorem}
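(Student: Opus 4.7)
The strategy runs in exact parallel with the proofs of Theorems \ref{Th1} and \ref{thm3.1}. First I would rewrite the two subordinations in Definition \ref{def1.3} as the equalities $\dfrac{z[f'(z)]^{\lambda}}{f(z)}=\tilde{p}_{\kappa}(u(z))$ and $\dfrac{w[g'(w)]^{\lambda}}{g(w)}=\tilde{p}_{\kappa}(v(w))$ for Schwarz functions $u,v$, pass to the associated Carath\'eodory functions $h,k\in\mathcal{P}$ with coefficients $(c_n)$ and $(d_n)$, and then invoke the ready-made expansions \eqref{2.4} and \eqref{2.5} of $\tilde{p}_{\kappa}(u(z))$ and $\tilde{p}_{\kappa}(v(w))$ directly.

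The main computational step is to Taylor-expand the two left-hand sides. Using $f(z)=z+a_2z^2+a_3z^3+\cdots$ and $g(w)=w-a_2w^2+(2a_2^2-a_3)w^3-\cdots$, a direct expansion gives
\begin{align*}
\frac{z[f'(z)]^{\lambda}}{f(z)} &= 1+(2\lambda-1)a_2\,z+\bigl[(3\lambda-1)a_3+(2\lambda^2-4\lambda+1)a_2^2\bigr]z^2+\cdots,\\
\frac{w[g'(w)]^{\lambda}}{g(w)} &= 1-(2\lambda-1)a_2\,w+\bigl[(3\lambda-1)(2a_2^2-a_3)+(2\lambda^2-4\lambda+1)a_2^2\bigr]w^2+\cdots.
\end{align*}
Matching these with \eqref{2.4} and \eqref{2.5} produces four scalar identities playing exactly the role of \eqref{2.6}--\eqref{2.9} in Theorem \ref{Th1}.

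From the two first-order identities one reads off $c_1=-d_1$ and $a_2^2=\kappa^{2}\tau_{\kappa}^{2}(c_1^2+d_1^2)/[8(2\lambda-1)^2]$. Adding the two second-order identities and eliminating $c_1^2+d_1^2$ via this relation recasts $a_2^2$ as an explicit multiple of $c_2+d_2$; the bound $|c_2|,|d_2|\leq 2$ from Lemma \ref{lem-pom} then yields the claimed estimate for $|a_2|$. Subtracting the second-order identities causes the $c_1^2$-- and $d_1^2$--contributions to cancel (because $c_1^2=d_1^2$), leaving $a_3-a_2^2=\kappa\tau_{\kappa}(c_2-d_2)/[4(3\lambda-1)]$; combined with the $|a_2|^2$ bound this produces the stated inequality for $|a_3|$.

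For the Fekete--Szeg\"o part I would begin with $a_3-\mu a_2^2=(1-\mu)a_2^2+\kappa\tau_{\kappa}(c_2-d_2)/[4(3\lambda-1)]$, substitute the closed form of $a_2^2$ in terms of $c_2+d_2$, and rewrite the result as a linear combination $(h(\mu)+\beta)c_2+(h(\mu)-\beta)d_2$, where $\beta=\kappa\tau_{\kappa}/[4(3\lambda-1)]$ and $h(\mu)=(1-\mu)\kappa^{3}\tau_{\kappa}^{2}/\bigl\{4\bigl[\lambda(2\lambda-1)\kappa^{2}\tau_{\kappa}+(2\lambda-1)^2(\kappa-(\kappa^{2}+2)\tau_{\kappa})\bigr]\bigr\}$. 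A standard two-case analysis according to whether $|h(\mu)|\leq|\beta|$ or $|h(\mu)|\geq|\beta|$, again using $|c_2|,|d_2|\leq 2$, reproduces the piecewise bound in the statement. There is no conceptual hurdle; the one delicate point will be the sign bookkeeping around the factor $\kappa-(\kappa^{2}+2)\tau_{\kappa}$ and the $(2\lambda-1)^2$ in the denominator when consolidating the formula for $a_2^2$, since a slip there would propagate simultaneously into the $|a_3|$ bound and into the threshold in the Fekete--Szeg\"o case split.
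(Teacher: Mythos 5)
Your proposal is correct and follows essentially the same route as the paper: the same Taylor expansions of $z[f'(z)]^{\lambda}/f(z)$ and $w[g'(w)]^{\lambda}/g(w)$ (your second-order coefficient $(3\lambda-1)(2a_2^2-a_3)+(2\lambda^2-4\lambda+1)a_2^2$ simplifies to the paper's $(2\lambda^2+2\lambda-1)a_2^2-(3\lambda-1)a_3$), the same elimination of $c_1^2+d_1^2$ to express $a_2^2$ via $c_2+d_2$, the same subtraction step giving $a_3-a_2^2=\kappa\tau_{\kappa}(c_2-d_2)/[4(3\lambda-1)]$, and the same $h(\mu)\pm\beta$ decomposition for the Fekete--Szeg\"o part. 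No gaps.
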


\begin{proof}
	Since $f\in\mathcal{S}\mathcal{L}\mathcal{B}_{\Sigma}^{\kappa}(\lambda;\tilde{p}_{\kappa})$, from the
	Definition \ref{def1.3} we have 
	\begin{equation}  \label{4.2}
	\frac{z\left[f^{\prime}(z)\right]^{\lambda}}{f(z)}
	= \widetilde{p_{\kappa}(u(z))}
	\end{equation}
	and 
	\begin{equation}  \label{4.3}
	\frac{w\left[g^{\prime}(w)\right]^{\lambda}}{g(w)}
	=\widetilde{p_{\kappa}(v(w))}.
	\end{equation}
	By virtue of \eqref{4.2}, \eqref{4.3}, \eqref{2.4} and \eqref{2.5}, we get 
	\begin{equation}  \label{4.6}
	(2\lambda-1)a_2=\dfrac{c_{1}\kappa\tau_{\kappa}}{2},
	\end{equation}
	\begin{equation}  \label{4.7}
	(3\lambda-1)a_{3}+(2\lambda^{2}-4\lambda+1)a_{2}^{2}=\dfrac{1}{2}\left(c_{2}-\dfrac{c_{1}^{2}}{2}\right)\kappa\tau_{\kappa}+\dfrac{c_{1}^{2}}{4}(\kappa^{2}+2)\tau_{\kappa}^{2},
	\end{equation}
	
	\begin{equation}  \label{4.8}
	-(2\lambda-1)a_2=\dfrac{d_{1}\kappa\tau_{\kappa}}{2}
	\end{equation}
	and 
	\begin{equation}  \label{4.9}
	(2\lambda^{2}+2\lambda-1)a_{2}^{2} - (3\lambda-1)a_{3} =\dfrac{1}{2}\left(d_{2}-\dfrac{d_{1}^{2}}{2}\right)\kappa\tau_{\kappa}+\dfrac{d_{1}^{2}}{4}(\kappa^{2}+2)\tau_{\kappa}^{2}.
	\end{equation}
	
	From \eqref{4.6} and \eqref{4.8}, we obtain
	\begin{equation*} 
	c_{1}=-d_{1},
	\end{equation*}
	and
	\begin{eqnarray} 
	2(2\lambda-1)^{2} a_{2}^{2} &=& \dfrac{(c_{1}^{2} + d_{1}^{2})\kappa^{2}\tau_{\kappa}^{2}}{4}\nonumber\\
	a_{2}^{2} &=& \dfrac{(c_{1}^{2} + d_{1}^{2})\kappa^{2}\tau_{\kappa}^{2}}{8(2\lambda-1)^{2}}~.\label{4.10}
	\end{eqnarray}
	By adding  \eqref{4.7} and \eqref{4.9}, we have 
	\begin{eqnarray}
	2\lambda\left(2\lambda-1\right) a_{2}^{2}
	= \dfrac{1}{2}\left(c_{2}+d_{2}\right)\kappa\tau_{\kappa} - \dfrac{1}{4}\left(c_{1}^{2}+d_{1}^{2}\right)\kappa\tau_{\kappa} + \dfrac{1}{4}\left(c_{1}^{2}+d_{1}^{2}\right)(\kappa^{2}+2)\tau_{\kappa}^{2}.\label{4.11}
	\end{eqnarray}
	By substituting  \eqref{4.10} in \eqref{4.11}, we reduce that
	\begin{eqnarray}
	a_{2}^{2} &=& \dfrac{\left(c_{2}+d_{2}\right)\kappa^{3}\tau_{\kappa}^{2}}{4\left[\lambda\left(2\lambda-1\right)\kappa^{2}\tau_{\kappa} + \left(\kappa - (\kappa^{2}+2)\tau_{\kappa}\right)\left(2\lambda-1\right)^{2}\right]}.\label{4.12}
	\end{eqnarray}
	Now, applying Lemma \ref{lem-pom}, we obtain
	\begin{eqnarray}
	\left|a_{2}\right| &\leq& \dfrac{\left|\kappa\tau_{\kappa}\right|\sqrt{\kappa}}{\sqrt{\left[\lambda\left(2\lambda-1\right)\kappa^{2}\tau_{\kappa} + \left(\kappa - (\kappa^{2}+2)\tau_{\kappa}\right)\left(2\lambda-1\right)^{2}\right]}}.\label{4.13}
	\end{eqnarray}
	By subtracting \eqref{4.9} from \eqref{4.7}, we obtain
	\begin{eqnarray}
	a_{3}= \dfrac{\left(c_{2}-d_{2}\right)\kappa\tau_{\kappa}}{4\left(3\lambda-1\right)}  + a_{2}^{2}.\label{4.14}
	\end{eqnarray}
	Hence by Lemma \ref{lem-pom}, we have 
	\begin{eqnarray}
	\left|a_{3}\right|&\leq& \dfrac{\left(\left|c_{2}\right|+\left|d_{2}\right|\right)\left|\kappa\tau_{\kappa}\right|}{4\left(3\lambda-1\right)}  + \left|a_{2}\right|^{2}\leq  \dfrac{\left|\kappa\tau_{\kappa}\right|}{3\lambda-1} + \left|a_{2}\right|^{2}
	.\label{4.15}
	\end{eqnarray}
	Then in view of \eqref{4.13}, we obtain
	\begin{eqnarray*}
		\left|a_{3}\right|
		\leq 
		\dfrac{\left|\kappa\tau_{\kappa}\right|\left[ \left(\kappa -  (\kappa^{2}+2)\tau_{\kappa}\right)(2\lambda-1)^{2}+\left(2\lambda^{2}-4\lambda+1\right)\kappa^{2}\tau_{\kappa}\right\}}{(3\lambda-1)\left[\lambda\left(2\lambda-1\right)\kappa^{2}\tau_{\kappa} 
			+ \left(\kappa -  (\kappa^{2}+2)\tau_{\kappa}\right)
			\left(2\lambda-1\right)^{2}\right]}.
	\end{eqnarray*}
	From \eqref{4.14}, we have 
	\begin{eqnarray}\label{Th3-Fekete-e1}
	a_{3} -\mu a_{2}^{2} = \dfrac{\left(c_{2}-d_{2}\right)\kappa\tau_{\kappa}}{4\left(3\lambda-1\right)} + \left(1-\mu\right)a_{2}^{2}.
	\end{eqnarray}
	By substituting \eqref{4.12} in \eqref{Th3-Fekete-e1}, we have 
	\begin{eqnarray}\label{Th3-Fekete-e2}
	a_{3} -\mu a_{2}^{2} 
	&=& \dfrac{\left(c_{2}-d_{2}\right)\kappa\tau_{\kappa}}{4\left(3\lambda-1\right)} + \left(1-\mu\right) \left(\dfrac{\left(c_{2}+d_{2}\right)\kappa^{3}\tau_{\kappa}^{2}}{4\left[\lambda\left(2\lambda-1\right)\kappa^{2}\tau_{\kappa} + \left(\kappa - (\kappa^{2}+2)\tau_{\kappa}\right)\left(2\lambda-1\right)^{2}\right]}\right)\nonumber\\
	&=& \left(h(\mu) + \dfrac{\kappa\tau_{\kappa}}{4\left(3\lambda-1\right)}\right)c_{2}
	+ \left(h(\mu) - \dfrac{\kappa\tau_{\kappa}}{4\left(3\lambda-1\right)}\right)d_{2},
	\end{eqnarray}
	where
	\begin{eqnarray*}
		h(\mu) = \dfrac{\left(1-\mu\right)\kappa^{3}\tau_{\kappa}^{2}}{4\left[\lambda\left(2\lambda-1\right)\kappa^{2}\tau_{\kappa} + \left(\kappa - (\kappa^{2}+2)\tau_{\kappa}\right)\left(2\lambda-1\right)^{2}\right]}.
	\end{eqnarray*}
	Thus by taking modulus of \eqref{Th3-Fekete-e2}, we conclude that
	\begin{eqnarray*}
		\left|a_{3}-\mu a_{2}^{2}\right|
		\leq \left\{
		\begin{array}{ll}
			\dfrac{\left|\kappa\tau_{\kappa}\right|}{\left(3\lambda-1\right)} &; 
			0 \leq \left|h(\mu)\right|\leq \dfrac{\left|\kappa\tau_{\kappa}\right|}{4\left(3\lambda-1\right)}\\
			4\left|h(\mu)\right| &; 
			\left|h(\mu)\right|\geq \dfrac{\left|\kappa\tau_{\kappa}\right|}{4\left(3\lambda-1\right)}.
		\end{array}		
		\right.
	\end{eqnarray*}
	This gives desired inequality. 
\end{proof}
\begin{theorem}
	\label{thm5.1} Let $f(z)=z+\sum\limits_{n=2}^{\infty}a_{n}z^{n}$ be in the
	class $\mathcal{P}\mathcal{S}\mathcal{L}_{\Sigma}^{\kappa}(\lambda;\tilde{p}_{\kappa})$. Then 
	\begin{align*}
	\left|a_2\right|&\leq \dfrac{\left|\kappa\tau_{\kappa}\right|\sqrt{\kappa}}{\sqrt{\left[\kappa^{2}\tau_{\kappa}(1+2\lambda-\lambda^{2})+(\kappa-(\kappa^{2}+2)\tau_{\kappa})(1+\lambda)^{2}\right]}},
	\end{align*}
	\begin{align*}
	\left|a_3\right|&\leq 
	\dfrac{\left|\kappa\tau_{\kappa}\right|\left(\kappa-2(\kappa^{2}+1)\tau_{\kappa}\right)(1+\lambda)^{2}}{2(1+2\lambda)\left[\kappa^{2}\tau_{\kappa}(1+2\lambda-\lambda^{2})+(\kappa-(\kappa^{2}+2)\tau_{\kappa})(1+\lambda)^{2}\right]}
	\end{align*}
	and
	\begin{eqnarray*}
		\left|a_{3}-\mu a_{2}^{2}\right|
		\leq \left\{
		\begin{array}{ll}
			\dfrac{\left|\kappa\tau_{\kappa}\right|}{2+4\lambda};\\
			\qquad\qquad\qquad 
			0 \leq \left|\mu-1\right|\leq \dfrac{\left[\kappa^{2}\tau_{\kappa}(1+2\lambda-\lambda^{2})+(\kappa-(\kappa^{2}+2)\tau_{\kappa})(1+\lambda)^{2}\right]}{2\kappa^{2}\left|\tau_{\kappa}\right|(1+2\lambda)}\\
			\dfrac{\left|1-\mu\right|\kappa^{3}\tau_{\kappa}^{2}}{\left[\kappa^{2}\tau_{\kappa}(1+2\lambda-\lambda^{2})+(\kappa-(\kappa^{2}+2)\tau_{\kappa})(1+\lambda)^{2}\right]};\\
			\qquad\qquad\qquad\qquad 
			\left|\mu-1\right|\geq \dfrac{\left[\kappa^{2}\tau_{\kappa}(1+2\lambda-\lambda^{2})+(\kappa-(\kappa^{2}+2)\tau_{\kappa})(1+\lambda)^{2}\right]}{2\kappa^{2}\left|\tau_{\kappa}\right|(1+2\lambda)}.
		\end{array}		
		\right.
	\end{eqnarray*}
\end{theorem}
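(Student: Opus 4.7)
The plan is to mirror the proofs of Theorems 2.1, 2.2 and 2.3 almost verbatim, the only substantive novelty being the Taylor expansion of the rational expression defining the class $\mathcal{P}\mathcal{S}\mathcal{L}_{\Sigma}^{\kappa}(\lambda;\tilde{p}_{\kappa})$. First I translate the two subordinations of Definition \ref{def1.4} into equalities
\begin{equation*}
\frac{zf'(z)+\lambda z^{2}f''(z)}{(1-\lambda)f(z)+\lambda zf'(z)} = \tilde{p}_\kappa(u(z)), \qquad
\frac{wg'(w)+\lambda w^{2}g''(w)}{(1-\lambda)g(w)+\lambda wg'(w)} = \tilde{p}_\kappa(v(w)),
\end{equation*}
for Schwarz functions $u,v$, and pass to Caratheodory functions $h=(1+u)/(1-u)$ and $k=(1+v)/(1-v)$ so that the expansions (\ref{2.4}) and (\ref{2.5}) of $\tilde{p}_\kappa(u(z))$ and $\tilde{p}_\kappa(v(w))$ in $c_n$ and $d_n$ can be used directly.

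The key computational step is to expand the left-hand side. A single geometric-series inversion of the denominator gives
\begin{equation*}
\frac{zf'(z)+\lambda z^{2}f''(z)}{(1-\lambda)f(z)+\lambda zf'(z)}
= 1 + (1+\lambda)a_{2}z + \bigl(2(1+2\lambda)a_{3} - (1+\lambda)^{2}a_{2}^{2}\bigr)z^{2} + \cdots,
\end{equation*}
and the analogous expansion for $g(w)$ follows by the substitution $a_{2}\mapsto -a_{2}$, $a_{3}\mapsto 2a_{2}^{2}-a_{3}$. Matching coefficients of $z, z^{2}, w, w^{2}$ with (\ref{2.4})--(\ref{2.5}) produces four scalar equations, directly analogous to (\ref{4.6})--(\ref{4.9}) but with the scalar factor $(1+\lambda)$ in place of $(2\lambda-1)$, the factor $2(1+2\lambda)$ in place of $(3\lambda-1)$, and an additional cross-term $-(1+\lambda)^{2}a_{2}^{2}$ on the left of the second-order relations.

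From this system the remainder of the argument runs on autopilot. The first and third equations give $c_{1}=-d_{1}$ and an expression for $a_{2}^{2}$ proportional to $c_{1}^{2}+d_{1}^{2}$; adding the second and fourth, substituting that expression, and solving yields
\begin{equation*}
a_{2}^{2} = \dfrac{(c_{2}+d_{2})\,\kappa^{3}\tau_{\kappa}^{2}}{4\bigl[\kappa^{2}\tau_{\kappa}(1+2\lambda-\lambda^{2}) + (\kappa-(\kappa^{2}+2)\tau_{\kappa})(1+\lambda)^{2}\bigr]},
\end{equation*}
and Lemma \ref{lem-pom} produces the claimed bound on $|a_{2}|$. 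Subtracting the fourth equation from the second (the $c_{1}^{2}, d_{1}^{2}$ and $-(1+\lambda)^{2}a_{2}^{2}$ terms all cancel) gives $a_{3}=a_{2}^{2}+(c_{2}-d_{2})\kappa\tau_{\kappa}/\bigl(8(1+2\lambda)\bigr)$, which when combined with the $|a_{2}|^{2}$ estimate furnishes the bound on $|a_{3}|$ after algebraic consolidation. For the Fekete--Szeg\"o inequality I substitute into $a_{3}-\mu a_{2}^{2}$ and regroup as $\bigl(h(\mu)+\tfrac{\kappa\tau_{\kappa}}{8(1+2\lambda)}\bigr)c_{2} + \bigl(h(\mu)-\tfrac{\kappa\tau_{\kappa}}{8(1+2\lambda)}\bigr)d_{2}$ with
\begin{equation*}
h(\mu) = \dfrac{(1-\mu)\,\kappa^{3}\tau_{\kappa}^{2}}{4\bigl[\kappa^{2}\tau_{\kappa}(1+2\lambda-\lambda^{2}) + (\kappa-(\kappa^{2}+2)\tau_{\kappa})(1+\lambda)^{2}\bigr]},
\end{equation*}
and split into the two cases according as $|h(\mu)|$ is below or above $|\kappa\tau_{\kappa}|/\bigl(8(1+2\lambda)\bigr)$, exactly paralleling the treatment in Theorem \ref{thm4.1}.

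The main (and really only) obstacle is getting the rational-function expansion right: unlike the linear Bazilevi\v{c}-type operators in Theorems \ref{Th1} and \ref{thm3.1}, the denominator here depends on $f$ itself, so recording the correct $z^{2}$-coefficient demands careful bookkeeping of the geometric-series inversion. Once that coefficient is in hand the structural form of the four defining equations is identical to those in Theorem \ref{thm4.1}, and the chain of manipulations producing $|a_{2}|$, $|a_{3}|$ and the Fekete--Szeg\"o inequality is essentially mechanical.
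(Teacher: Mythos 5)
Your proposal is correct and follows essentially the same route as the paper's own proof: the coefficient equations you derive (with $(1+\lambda)a_2$ at first order and $2(1+2\lambda)a_3-(1+\lambda)^2a_2^2$ at second order, plus the inverse-function analogues via $a_2\mapsto -a_2$, $a_3\mapsto 2a_2^2-a_3$) match the paper's equations (5.6)--(5.9), and the subsequent additions, subtractions, substitution of the $a_2^2$ formula, and the two-case split at $|h(\mu)|=|\kappa\tau_\kappa|/(8(1+2\lambda))$ reproduce the paper's argument exactly.
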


\begin{proof}
	Since $f\in\mathcal{P}\mathcal{S}\mathcal{L}_{\Sigma}^{\kappa}(\lambda;\tilde{p}_{\kappa})$, from the
	Definition \ref{def1.4} we have 
	\begin{equation}  \label{5.2}
	\frac{zf^{\prime}(z)+\lambda z^{2}f^{\prime\prime}(z)}{(1-\lambda)f(z)+\lambda zf^{\prime}(z)}
	= \widetilde{p_{\kappa}(u(z))}
	\end{equation}
	and 
	\begin{equation}  \label{5.3}
	\frac{wf^{\prime}(w)+\lambda w^{2}g^{\prime\prime}(w)}{(1-\lambda)g(w)+\lambda wg^{\prime}(w)}
	=\widetilde{p_{\kappa}(v(w))}.
	\end{equation}
	By virtue of \eqref{5.2}, \eqref{5.3}, \eqref{2.4} and \eqref{2.5}, we get 
	\begin{equation}  \label{5.6}
	(1+\lambda)a_2=\dfrac{c_{1}\kappa\tau_{\kappa}}{2},
	\end{equation}
	\begin{equation}  \label{5.7}
	2(1+2\lambda)a_{3}-(1+\lambda)^{2}a_{2}^{2}=\dfrac{1}{2}\left(c_{2}-\dfrac{c_{1}^{2}}{2}\right)\kappa\tau_{\kappa}+\dfrac{c_{1}^{2}}{4}(\kappa^{2}+2)\tau_{\kappa}^{2},
	\end{equation}
	
	\begin{equation}  \label{5.8}
	-(1+\lambda)a_2=\dfrac{d_{1}\kappa\tau_{\kappa}}{2}
	\end{equation}
	and 
	\begin{equation}  \label{5.9}
	-2(1+2\lambda)a_{3}-(\lambda^{2}-6\lambda-3)a_{2}^{2}  =\dfrac{1}{2}\left(d_{2}-\dfrac{d_{1}^{2}}{2}\right)\kappa\tau_{\kappa}+\dfrac{d_{1}^{2}}{4}(\kappa^{2}+2)\tau_{\kappa}^{2}.
	\end{equation}
	
	From \eqref{5.6} and \eqref{5.8}, we obtain
	\begin{equation*} 
	c_{1}=-d_{1},
	\end{equation*}
	and
	\begin{eqnarray} 
	2(1+\lambda)^{2} a_{2}^{2} &=& \dfrac{(c_{1}^{2} + d_{1}^{2})\kappa^{2}\tau_{\kappa}^{2}}{4}\nonumber\\
	a_{2}^{2} &=& \dfrac{(c_{1}^{2} + d_{1}^{2})\kappa^{2}\tau_{\kappa}^{2}}{8(1+\lambda)^{2}}~.\label{5.10}
	\end{eqnarray}
	By adding  \eqref{5.7} and \eqref{5.9}, we have 
	\begin{eqnarray}
	2\left(1+2\lambda-\lambda^{2}\right) a_{2}^{2}
	= \dfrac{1}{2}\left(c_{2}+d_{2}\right)\kappa\tau_{\kappa} - \dfrac{1}{4}\left(c_{1}^{2}+d_{1}^{2}\right)\kappa\tau_{\kappa} + \dfrac{1}{4}\left(c_{1}^{2}+d_{1}^{2}\right)(\kappa^{2}+2)\tau_{\kappa}^{2}.\label{5.11}
	\end{eqnarray}
	By substituting  \eqref{5.10} in \eqref{5.11}, we reduce that
	\begin{eqnarray}
	a_{2}^{2} &=& \dfrac{\left(c_{2}+d_{2}\right)\kappa^{3}\tau_{\kappa}^{2}}{4\left[\kappa^{2}\tau_{\kappa}(1+2\lambda-\lambda^{2})+(\kappa-(\kappa^{2}+2)\tau_{\kappa})(1+\lambda)^{2}\right]}.\label{5.12}
	\end{eqnarray}
	Now, applying Lemma \ref{lem-pom}, we obtain
	\begin{eqnarray}
	\left|a_{2}\right| &\leq& \dfrac{\left|\kappa\tau_{\kappa}\right|\sqrt{\kappa}}{\sqrt{\left[\kappa^{2}\tau_{\kappa}(1+2\lambda-\lambda^{2})+(\kappa-(\kappa^{2}+2)\tau_{\kappa})(1+\lambda)^{2}\right]}}.\label{5.13}
	\end{eqnarray}
	By subtracting \eqref{5.9} from \eqref{5.7}, we obtain
	\begin{eqnarray}
	a_{3}= \dfrac{\left(c_{2}-d_{2}\right)\kappa\tau_{\kappa}}{8\left(1+2\lambda\right)}  + a_{2}^{2}.\label{5.14}
	\end{eqnarray}
	Hence by Lemma \ref{lem-pom}, we have 
	\begin{eqnarray*}
		\left|a_{3}\right|&\leq& \dfrac{\left(\left|c_{2}\right|+\left|d_{2}\right|\right)\left|\kappa\tau_{\kappa}\right|}{8\left(1+2\lambda\right)}  + \left|a_{2}\right|^{2}\leq  \dfrac{\left|\kappa\tau_{\kappa}\right|}{2+4\lambda} + \left|a_{2}\right|^{2}
		.\label{5.15}
	\end{eqnarray*}
	Then in view of \eqref{5.13}, we obtain
	\begin{eqnarray*}
		\left|a_{3}\right|
		\leq 
		\dfrac{\left|\kappa\tau_{\kappa}\right|\left(\kappa-2(\kappa^{2}+1)\tau_{\kappa}\right)(1+\lambda)^{2}}{2(1+2\lambda)\left[\kappa^{2}\tau_{\kappa}(1+2\lambda-\lambda^{2})+(\kappa-(\kappa^{2}+2)\tau_{\kappa})(1+\lambda)^{2}\right]}
	\end{eqnarray*}
	From \eqref{5.14}, we have 
	\begin{eqnarray}\label{Th4-Fekete-e1}
	a_{3} -\mu a_{2}^{2} = \dfrac{\left(c_{2}-d_{2}\right)\kappa\tau_{\kappa}}{8\left(1+2\lambda\right)} + \left(1-\mu\right)a_{2}^{2}.
	\end{eqnarray}
	By substituting \eqref{5.12} in \eqref{Th4-Fekete-e1}, we have 
	\begin{eqnarray}\label{Th4-Fekete-e2}
	a_{3} -\mu a_{2}^{2} 
	&=& \dfrac{\left(c_{2}-d_{2}\right)\kappa\tau_{\kappa}}{8\left(1+2\lambda\right)} + \left(1-\mu\right) \left(\dfrac{\left(c_{2}+d_{2}\right)\kappa^{3}\tau_{\kappa}^{2}}{4\left[\kappa^{2}\tau_{\kappa}(1+2\lambda-\lambda^{2})+(\kappa-(\kappa^{2}+2)\tau_{\kappa})(1+\lambda)^{2}\right]}\right)\nonumber\\
	&=& \left(h(\mu) + \dfrac{\kappa\tau_{\kappa}}{8\left(1+2\lambda\right)}\right)c_{2}
	+ \left(h(\mu) - \dfrac{\kappa\tau_{\kappa}}{8\left(1+2\lambda\right)}\right)d_{2},
	\end{eqnarray}
	where
	\begin{eqnarray*}
		h(\mu) = \dfrac{\left(1-\mu\right)\kappa^{3}\tau_{\kappa}^{2}}{4\left[\kappa^{2}\tau_{\kappa}(1+2\lambda-\lambda^{2})+(\kappa-(\kappa^{2}+2)\tau_{\kappa})(1+\lambda)^{2}\right]}.
	\end{eqnarray*}
	Thus by taking modulus of \eqref{Th4-Fekete-e2}, we conclude that
	\begin{eqnarray*}
		\left|a_{3}-\mu a_{2}^{2}\right|
		\leq \left\{
		\begin{array}{ll}
			\dfrac{\left|\kappa\tau_{\kappa}\right|}{2\left(1+2\lambda\right)} &; 
			0 \leq \left|h(\mu)\right|\leq \dfrac{\left|\kappa\tau_{\kappa}\right|}{8\left(1+2\lambda\right)}\\
			4\left|h(\mu)\right| &; 
			\left|h(\mu)\right|\geq \dfrac{\left|\kappa\tau_{\kappa}\right|}{8\left(1+2\lambda\right)}.
		\end{array}		
		\right.
	\end{eqnarray*}
	This gives desired inequality. 
\end{proof}

\section{Corollaries and Consequences}
\begin{corollary}
	\label{cor2.1} Let $f(z)=z+\sum\limits_{n=2}^{\infty}a_{n}z^{n}$ be in the
	class $\mathcal{F}\mathcal{S}\mathcal{L}_{\Sigma}^{\kappa}(\gamma, \lambda, \tilde{p}_{\kappa})$. Then 
	\begin{align*}
	\left|a_{2}\right| &\leq \dfrac{\left|\gamma\right|\left|\kappa\tau_{\kappa}\right|\sqrt{\kappa}}{\sqrt{3\gamma \kappa^{2}\tau_{\kappa}\left(1+ 2\lambda\right)+4(\kappa-(\kappa^{2}+2)\tau_{\kappa})(1+\lambda)^{2}}},\qquad
	\end{align*}
	\begin{align*}
	\left|a_3\right|\leq \dfrac{4\left|\gamma\right|\left|\kappa\tau_{\kappa}\right|(\kappa-(\kappa^{2}+2)\tau_{\kappa})(1+\lambda)^{2}}{3(1+2\lambda)\left[3\gamma \kappa^{2}\tau_{\kappa}\left(1+ 2\lambda\right)+4(\kappa-(\kappa^{2}+2)\tau_{\kappa})(1+\lambda)^{2}\right]}
	\end{align*}
	and
	\begin{eqnarray*}
	\left|a_{3}-\mu a_{2}^{2}\right|
	\leq \left\{
	\begin{array}{ll}
	\dfrac{\left|\gamma\right|\left|\kappa\tau_{\kappa}\right|}{3+6\lambda} ;
	\\
	\qquad\qquad 
	0 \leq \left|\mu - 1\right|\leq \dfrac{3\gamma \kappa^{2}\tau_{\kappa}\left(1+ 2\lambda\right)+4(\kappa-(\kappa^{2}+2)\tau_{\kappa})(1+\lambda)^{2}}{\left(3+6\lambda\right)\left|\gamma\right|\kappa^{2}\left|\tau_{\kappa}\right|}
	\\
	\dfrac{\left|1-\mu\right|\gamma^{2}\kappa^{3}\tau_{\kappa}^{2}}{3\gamma \kappa^{2}\tau_{\kappa}\left(1+ 2\lambda\right)+4(\kappa-(\kappa^{2}+2)\tau_{\kappa})(1+\lambda)^{2}} ; 
	\\
	\qquad\qquad
	\left|\mu -1\right|\geq \dfrac{3\gamma \kappa^{2}\tau_{\kappa}\left(1+ 2\lambda\right)+4(\kappa-(\kappa^{2}+2)\tau_{\kappa})(1+\lambda)^{2}}{\left(3+6\lambda\right)\left|\gamma\right|\kappa^{2}\left|\tau_{\kappa}\right|}~.
	\end{array}		
	\right.
	\end{eqnarray*}
\end{corollary}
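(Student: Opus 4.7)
The proof proposal is essentially a specialization argument rather than a fresh derivation. According to item (1) in the enumeration following Definition \ref{def1.1}, the class $\mathcal{F}\mathcal{S}\mathcal{L}_{\Sigma}^{\kappa}(\gamma, \lambda, \tilde{p}_{\kappa})$ is obtained by setting $\alpha = 1 + 2\lambda$ in the parent class $\mathcal{W}\mathcal{S}\mathcal{L}_{\Sigma}^{\kappa}(\gamma, \lambda, \alpha, \tilde{p}_{\kappa})$. Therefore the plan is simply to invoke Theorem \ref{Th1} with this choice of $\alpha$ and then simplify the resulting expressions algebraically.

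The key algebraic reductions I would carry out first, to make the substitution painless, are
\begin{equation*}
1+\alpha = 2(1+\lambda),\qquad (1+\alpha)^{2} = 4(1+\lambda)^{2},\qquad 1+2\alpha+2\lambda = 3(1+2\lambda).
\end{equation*}
Feeding these identities into the bound for $|a_{2}|$ in Theorem \ref{Th1} immediately yields the denominator $\sqrt{3\gamma\kappa^{2}\tau_{\kappa}(1+2\lambda)+4(\kappa-(\kappa^{2}+2)\tau_{\kappa})(1+\lambda)^{2}}$ appearing in the corollary. The same substitution in the bound for $|a_{3}|$ produces the factor $4(1+\lambda)^{2}$ in the numerator and the factor $3(1+2\lambda)$ multiplying the full bracket in the denominator, which reproduces the stated estimate verbatim.

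For the Fekete--Szeg\"{o} inequality, I would substitute $\alpha = 1+2\lambda$ into each of the two branches of the piecewise estimate of Theorem \ref{Th1}. The first branch then collapses to $|\gamma||\kappa\tau_{\kappa}|/(3+6\lambda)$ and the ``small $|\mu-1|$'' threshold becomes $\bigl[3\gamma\kappa^{2}\tau_{\kappa}(1+2\lambda)+4(\kappa-(\kappa^{2}+2)\tau_{\kappa})(1+\lambda)^{2}\bigr]/[(3+6\lambda)|\gamma|\kappa^{2}|\tau_{\kappa}|]$, matching the corollary. The second branch, after the same substitution, yields $|1-\mu|\gamma^{2}\kappa^{3}\tau_{\kappa}^{2}$ divided by the same bracket, again as stated.

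There is no real obstacle here: the content is entirely contained in Theorem \ref{Th1}, and the only work is bookkeeping. The one place where a reader might slip is in confirming that $1+2\alpha+2\lambda$ really simplifies to $3(1+2\lambda)$ rather than to $3+4\lambda$ or similar, and in carrying the factors of $2$ and $4$ correctly through the numerator and denominator of the $|a_{3}|$ estimate; I would therefore write out those two simplifications explicitly and leave the Fekete--Szeg\"{o} piecewise bound as a direct specialization, since its structure is preserved unchanged under the substitution.
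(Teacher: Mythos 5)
Your proposal is correct and is exactly the route the paper intends: Corollary \ref{cor2.1} is stated without a separate proof precisely because it is the specialization $\alpha=1+2\lambda$ of Theorem \ref{Th1}, and your identities $1+\alpha=2(1+\lambda)$ and $1+2\alpha+2\lambda=3(1+2\lambda)$ reproduce all three bounds verbatim. Nothing further is needed.
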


\begin{corollary}
	\label{cor2.2} Let $f(z)=z+\sum\limits_{n=2}^{\infty}a_{n}z^{n}$ be in the
	class $\mathcal{B}\mathcal{S}\mathcal{L}_{\Sigma}^{\kappa}(\gamma, \alpha, \tilde{p}_{\kappa})$. Then 
	\begin{align*}
	\left|a_{2}\right| &\leq \dfrac{\left|\gamma\right|\left|\kappa\tau_{\kappa}\right|\sqrt{\kappa}}{\sqrt{\gamma \kappa^{2}\tau_{\kappa}\left(1+2\alpha\right)+(\kappa-(\kappa^{2}+2)\tau_{\kappa})(1+\alpha)^{2}}},\qquad
	\end{align*}
	\begin{align*}
	\left|a_3\right|\leq \dfrac{\left|\gamma\right|\left|\kappa\tau_{\kappa}\right|(\kappa-(\kappa^{2}+2)\tau_{\kappa})(1+\alpha)^{2}}{(1+2\alpha)\left[\gamma \kappa^{2}\tau_{\kappa}\left(1+2\alpha\right)+(\kappa-(\kappa^{2}+2)\tau_{\kappa})(1+\alpha)^{2}\right]}
	\end{align*}
	and
	\begin{eqnarray*}
		\left|a_{3}-\mu a_{2}^{2}\right|
		\leq \left\{
		\begin{array}{ll}
			\dfrac{\left|\gamma\right|\left|\kappa\tau_{\kappa}\right|}{1+2\alpha};\\
			\qquad\qquad\qquad 
			0 \leq \left|\mu-1\right|\leq \dfrac{\gamma \kappa^{2}\tau_{\kappa}\left(1+2\alpha\right)+(\kappa-(\kappa^{2}+2)\tau_{\kappa})(1+\alpha)^{2}}{\left(1+2\alpha\right)\left|\gamma\right|\kappa^{2}\left|\tau_{\kappa}\right|}\\
			\dfrac{\left|1-\mu\right|\gamma^{2}\kappa^{3}\tau_{\kappa}^{2}}{\gamma \kappa^{2}\tau_{\kappa}\left(1+2\alpha\right)+(\kappa-(\kappa^{2}+2)\tau_{\kappa})(1+\alpha)^{2}};\\
			\qquad\qquad\qquad 
			\left|\mu-1\right|\geq \dfrac{\gamma \kappa^{2}\tau_{\kappa}\left(1+2\alpha\right)+(\kappa-(\kappa^{2}+2)\tau_{\kappa})(1+\alpha)^{2}}{\left(1+2\alpha\right)\left|\gamma\right|\kappa^{2}\left|\tau_{\kappa}\right|}~.
		\end{array}		
		\right.
	\end{eqnarray*}
\end{corollary}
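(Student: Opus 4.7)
The plan is to derive the corollary as a direct specialization of Theorem \ref{Th1} to $\lambda = 0$. Indeed, by the classification recorded immediately after Definition \ref{def1.1}, we have $\mathcal{B}\mathcal{S}\mathcal{L}_{\Sigma}^{\kappa}(\gamma, \alpha, \tilde{p}_{\kappa}) \equiv \mathcal{W}\mathcal{S}\mathcal{L}_{\Sigma}^{\kappa}(\gamma, 0, \alpha, \tilde{p}_{\kappa})$, so a function $f \in \mathcal{B}\mathcal{S}\mathcal{L}_{\Sigma}^{\kappa}(\gamma, \alpha, \tilde{p}_{\kappa})$ is precisely one for which both subordinations of Definition \ref{def1.1} hold with $\lambda = 0$. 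Every estimate proved in Theorem \ref{Th1} therefore applies to such $f$, and it remains only to carry out the substitution $\lambda = 0$ in each displayed inequality and simplify.

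Concretely, I would first substitute $\lambda = 0$ into the bound for $|a_{2}|$, so that the factor $1 + 2\alpha + 2\lambda$ collapses to $1 + 2\alpha$, producing the claimed estimate. The same substitution applied to the bound for $|a_{3}|$ leaves the numerator factor $(1+\alpha)^{2}$ untouched while replacing each occurrence of $1 + 2\alpha + 2\lambda$ by $1 + 2\alpha$, again yielding the stated form. For the Fekete--Szeg\"o estimate, I would propagate the substitution through both branches of the piecewise bound and through the threshold on $|\mu - 1|$, which becomes
\[
\dfrac{\gamma \kappa^{2}\tau_{\kappa}(1+2\alpha) + (\kappa - (\kappa^{2}+2)\tau_{\kappa})(1+\alpha)^{2}}{(1+2\alpha)\,|\gamma|\,\kappa^{2}\,|\tau_{\kappa}|},
\]
exactly as in the corollary.

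No genuine obstacle is anticipated: the expansions \eqref{2.4}--\eqref{2.5}, the coefficient identifications \eqref{2.6}--\eqref{2.9}, and the invocation of Lemma \ref{lem-pom} all transfer from Theorem \ref{Th1} unchanged with $\lambda$ fixed at $0$. Should a self-contained derivation be preferred, the only mild adjustment would be that in the analogues of \eqref{2.7} and \eqref{2.9} the term $\lambda z f''(z)$ vanishes, so the left-hand sides reduce to $(1+2\alpha) a_{3}/\gamma$ and $(1+2\alpha)(2 a_{2}^{2} - a_{3})/\gamma$ respectively; from there the addition/subtraction trick and Lemma \ref{lem-pom} proceed verbatim and yield the displayed bounds on $|a_{2}|$, $|a_{3}|$, and $|a_{3} - \mu a_{2}^{2}|$.
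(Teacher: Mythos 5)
Your proposal is correct and is exactly the route the paper intends: the corollary is stated without proof as the $\lambda=0$ specialization of Theorem \ref{Th1}, and substituting $\lambda=0$ into each of its three bounds (so that $1+2\alpha+2\lambda$ becomes $1+2\alpha$) reproduces the displayed estimates, up to the paper's inconsistent placement of absolute values on $\gamma$ and $\tau_{\kappa}$ in the threshold for $|\mu-1|$.
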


\begin{corollary}
	\label{cor2.3} Let $f(z)=z+\sum\limits_{n=2}^{\infty}a_{n}z^{n}$ be in the
	class $\mathcal{H}\mathcal{S}\mathcal{L}_{\Sigma}^{\kappa}(\gamma, \tilde{p}_{\kappa})$. Then 
	\begin{align*}
	\left|a_{2}\right| &\leq \dfrac{\left|\gamma\right|\left|\kappa\tau_{\kappa}\right|\sqrt{\kappa}}{\sqrt{3\gamma \kappa^{2}\tau_{\kappa}+4(\kappa-(\kappa^{2}+2)\tau_{\kappa})}},
	\qquad
	\left|a_3\right|\leq \dfrac{4\left|\gamma\right|\left|\kappa\tau_{\kappa}\right|(\kappa-(\kappa^{2}+2)\tau_{\kappa})}{3\left[3\gamma \kappa^{2}\tau_{\kappa}+4(\kappa-(\kappa^{2}+2)\tau_{\kappa})\right]},
	\end{align*}
	and
	\begin{eqnarray*}
		\left|a_{3}-\mu a_{2}^{2}\right|
		\leq \left\{
		\begin{array}{ll}
			\dfrac{\left|\gamma\right|\left|\kappa\tau\right|}{3} &; 
			0 \leq \left|\mu - 1\right|\leq \dfrac{3\gamma \kappa^{2}\tau_{\kappa}+4(\kappa-(\kappa^{2}+2)\tau_{\kappa})}{3\left|\gamma\right|\kappa^{2}\left|\tau_{\kappa}\right|}\\
			\dfrac{\left|1-\mu\right|\gamma^{2}\kappa^{3}\tau_{\kappa}^{2}}{3\gamma \kappa^{2}\tau_{\kappa}+4(\kappa-(\kappa^{2}+2)\tau_{\kappa})} &; 
			\left|\mu-1\right|\geq \dfrac{3\gamma \kappa^{2}\tau_{\kappa}+4(\kappa-(\kappa^{2}+2)\tau_{\kappa})}{3\left|\gamma\right|\kappa^{2}\left|\tau_{\kappa}\right|}~.
		\end{array}		
		\right.
	\end{eqnarray*}
\end{corollary}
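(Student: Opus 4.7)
The plan is to derive Corollary~\ref{cor2.3} directly from the already-established Theorem~\ref{Th1} (or equivalently from Corollary~\ref{cor2.2}) by specializing the parameters, rather than reconstructing the Schwarz-function and coefficient-comparison machinery from scratch. Indeed, by item~(3) in the list following Definition~\ref{def1.1} we have the identification
\[
\mathcal{H}\mathcal{S}\mathcal{L}_{\Sigma}^{\kappa}(\gamma,\tilde{p}_{\kappa}) \equiv \mathcal{W}\mathcal{S}\mathcal{L}_{\Sigma}^{\kappa}(\gamma,0,1,\tilde{p}_{\kappa}),
\]
so every $f\in\mathcal{H}\mathcal{S}\mathcal{L}_{\Sigma}^{\kappa}(\gamma,\tilde{p}_{\kappa})$ lies in $\mathcal{W}\mathcal{S}\mathcal{L}_{\Sigma}^{\kappa}(\gamma,\lambda,\alpha,\tilde{p}_{\kappa})$ with $\lambda=0$ and $\alpha=1$. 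All three assertions of the corollary should therefore arise from a single substitution of these two values in the conclusions of Theorem~\ref{Th1}.

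First, I would record the elementary reductions that control every occurrence of $\alpha$ and $\lambda$ in Theorem~\ref{Th1}: with $\lambda=0$ and $\alpha=1$ one has $1+2\alpha+2\lambda=3$ and $(1+\alpha)^{2}=4$. Substituting these into the bound for $|a_{2}|$ immediately gives
\[
|a_{2}|\leq \dfrac{|\gamma||\kappa\tau_{\kappa}|\sqrt{\kappa}}{\sqrt{3\gamma\kappa^{2}\tau_{\kappa}+4(\kappa-(\kappa^{2}+2)\tau_{\kappa})}},
\]
which is exactly the asserted inequality. Inserting the same values into the bound for $|a_{3}|$ produces the factor $4/3$ in front of the quotient $(\kappa-(\kappa^{2}+2)\tau_{\kappa})/[3\gamma\kappa^{2}\tau_{\kappa}+4(\kappa-(\kappa^{2}+2)\tau_{\kappa})]$, matching the stated expression.

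The final step is the Fekete--Szeg\"o branch, which is the only place where one has to be careful. I would substitute $\lambda=0$ and $\alpha=1$ into both branches of the two-case bound for $|a_{3}-\mu a_{2}^{2}|$ and into the threshold on $|\mu-1|$ that separates them. Every instance of $1+2\alpha+2\lambda$ again collapses to $3$ and every $(1+\alpha)^{2}$ to $4$, so the threshold reduces to $[3\gamma\kappa^{2}\tau_{\kappa}+4(\kappa-(\kappa^{2}+2)\tau_{\kappa})]/[3|\gamma|\kappa^{2}|\tau_{\kappa}|]$, while the two branch values simplify to $|\gamma||\kappa\tau_{\kappa}|/3$ and $|1-\mu|\gamma^{2}\kappa^{3}\tau_{\kappa}^{2}/[3\gamma\kappa^{2}\tau_{\kappa}+4(\kappa-(\kappa^{2}+2)\tau_{\kappa})]$, as required. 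I do not foresee any substantive obstacle: the entire argument is a parameter specialization, and the only delicate point is careful bookkeeping of the absolute-value signs on $\gamma$, $\kappa$, and $\tau_{\kappa}$ so that the quantity inside the square root in the $|a_{2}|$ bound is interpreted as positive.
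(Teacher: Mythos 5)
Your proposal is correct and matches the paper's (implicit) derivation exactly: the paper states Corollary \ref{cor2.3} without proof as the specialization $\lambda=0$, $\alpha=1$ of Theorem \ref{Th1}, under the identification $\mathcal{H}\mathcal{S}\mathcal{L}_{\Sigma}^{\kappa}(\gamma,\tilde{p}_{\kappa})\equiv\mathcal{W}\mathcal{S}\mathcal{L}_{\Sigma}^{\kappa}(\gamma,0,1,\tilde{p}_{\kappa})$, and your substitutions $1+2\alpha+2\lambda=3$ and $(1+\alpha)^{2}=4$ reproduce all three bounds.
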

\begin{corollary}
	\label{Cor-Thm3.1} Let $f(z)=z+\sum\limits_{n=2}^{\infty}a_{n}z^{n}$ be in the
	class $\mathcal{S}\mathcal{L}_{\Sigma}^{\kappa}(\gamma,\tilde{p}_{\kappa})$. Then 
	\begin{align*}
	\left|a_2\right|&\leq 
	\dfrac{\left|\gamma\right|\left|\kappa\tau_{\kappa}\right|\sqrt{\kappa}}{\sqrt{\gamma \kappa^{2}\tau_{\kappa}+(\kappa - (\kappa^{2}+2)\tau_{\kappa})}},\quad
	\left|a_3\right|\leq \dfrac{\left|\gamma\right|\left|\kappa\tau_{\kappa}\right|\left|(\kappa - (\kappa^{2}+2)\tau_{\kappa})-\gamma\kappa^{2}\tau_{\kappa}\right|}{2\gamma\kappa^{2}\tau_{\kappa}+2(\kappa - (\kappa^{2}+2)\tau_{\kappa})}
	\end{align*}
	and
\begin{eqnarray*}
	\left|a_{3}-\mu a_{2}^{2}\right|
	\leq \left\{
	\begin{array}{ll}
		\dfrac{\left|\gamma\right|\left|\kappa\tau_{\kappa}\right|}{2} &; 
		0 \leq \left|\mu-1\right|\leq \dfrac{\gamma\kappa^{2}\tau_{\kappa}+\left(\kappa-(\kappa^{2}+2)\tau_{\kappa}\right)}{2\left|\gamma\right|\kappa^{2}\left|\tau_{\kappa}\right|}\\
		\dfrac{\left|1-\mu\right|\gamma^{2}\kappa^{3}\tau_{\kappa}^{2}}{\gamma\kappa^{2}\tau_{\kappa}+\left(\kappa-(\kappa^{2}+2)\tau_{\kappa}\right)} &; 
		\left|\mu-1\right|\geq \dfrac{\gamma\kappa^{2}\tau_{\kappa}+\left(\kappa-(\kappa^{2}+2)\tau_{\kappa}\right)}{2\left|\gamma\right|\kappa^{2}\left|\tau_{\kappa}\right|}.
	\end{array}		
	\right.
\end{eqnarray*}
\end{corollary}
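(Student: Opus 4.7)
The plan is to recognize that the class $\mathcal{S}\mathcal{L}_{\Sigma}^{\kappa}(\gamma,\tilde{p}_{\kappa})$ is simply the specialization of $\mathcal{R}\mathcal{S}\mathcal{L}_{\Sigma}^{\kappa}(\gamma,\lambda,\tilde{p}_{\kappa})$ at $\lambda=0$, as stated immediately after Definition \ref{def1.2}. So the cleanest strategy is to invoke Theorem \ref{thm3.1} directly and simplify the resulting expressions. No new coefficient identities need to be derived; the Schwarz-function expansions, the identification of $c_1=-d_1$, the key formula for $a_2^2$, and the decomposition of $a_3-\mu a_2^2$ are already carried out in full generality in the proof of Theorem \ref{thm3.1}.

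Concretely, I would proceed in three short steps. First, set $\lambda=0$ in the $|a_2|$-bound of Theorem \ref{thm3.1}; the expression $(2+\lambda)(1+\lambda)=2$ and $(1+\lambda)^2=1$ cause the factor of $2$ in numerator and denominator to cancel with the $\sqrt{2}$, yielding exactly the claimed bound for $|a_2|$. Second, put $\lambda=0$ into the $|a_3|$-bound, where the numerator simplifies to $|\gamma||\kappa\tau_\kappa|\{2\gamma\kappa^2\tau_\kappa+2(\kappa-(\kappa^2+2)\tau_\kappa)-4\gamma\kappa^2\tau_\kappa\}$ and the denominator to $2\cdot 2[\gamma\kappa^2\tau_\kappa+(\kappa-(\kappa^2+2)\tau_\kappa)]$; absorbing the $2$'s and replacing the bracket by its absolute value (which is needed because the sign of $(\kappa-(\kappa^2+2)\tau_\kappa)-\gamma\kappa^2\tau_\kappa$ depends on $\gamma$) delivers the stated estimate. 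Third, specialize the Fekete–Szegő piece of Theorem \ref{thm3.1}: the auxiliary quantity $M$ reduces to $2[\gamma\kappa^2\tau_\kappa+(\kappa-(\kappa^2+2)\tau_\kappa)]$, the threshold $M/[2|\gamma|\kappa^2|\tau_\kappa|(2+\lambda)]$ collapses to $[\gamma\kappa^2\tau_\kappa+(\kappa-(\kappa^2+2)\tau_\kappa)]/[2|\gamma|\kappa^2|\tau_\kappa|]$, and the second branch $2|1-\mu|\gamma^2\kappa^3\tau_\kappa^2/M$ becomes $|1-\mu|\gamma^2\kappa^3\tau_\kappa^2/[\gamma\kappa^2\tau_\kappa+(\kappa-(\kappa^2+2)\tau_\kappa)]$, matching the corollary verbatim.

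The only potential obstacle is bookkeeping: one must take care that the factor $(2+\lambda)$ in the first case of the Fekete–Szegő bound of Theorem \ref{thm3.1} becomes $2$ (not $1$) when $\lambda=0$, and that the $\sqrt{2}$ in the $|a_2|$-estimate is exactly cancelled by the $\sqrt{2}$ emerging from $\sqrt{(2+\lambda)(1+\lambda)+\cdots}$ after factoring. Since these are routine algebraic simplifications and no fresh estimation is required, the proof amounts to writing ``setting $\lambda=0$ in Theorem \ref{thm3.1} and simplifying'' together with the three substitutions above. No appeal to Lemma \ref{lem-pom} beyond what is already used in Theorem \ref{thm3.1} is needed.
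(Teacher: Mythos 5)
Your proposal is correct and is exactly the route the paper intends: the corollary is stated without proof in the ``Corollaries and Consequences'' section precisely because it is the specialization $\lambda=0$ of Theorem \ref{thm3.1}, and your three substitutions (cancellation of $\sqrt{2}$ in the $|a_2|$ bound, reduction of the numerator and denominator in the $|a_3|$ bound, and collapse of $M$ and the threshold in the Fekete--Szeg\H{o} estimate) all check out. No further comment is needed.
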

\begin{corollary}\cite{HOG-GMS-JS-2019-CFS-kF}
	\label{cor4.1} Let $f(z)=z+\sum\limits_{n=2}^{\infty}a_{n}z^{n}$ be in the
	class $\mathcal{S}\mathcal{L}_{\Sigma}^{\kappa}(\tilde{p}_{\kappa}).$ Then 
	\begin{align*}
	\left|a_2\right|&\leq \dfrac{\left|\kappa\tau_{\kappa}\right|\sqrt{\kappa}}{\sqrt{\kappa-2\tau_{\kappa}}}, \;\;\;\; 
	\left|a_3\right| \leq \dfrac{\left|\kappa\tau_{\kappa}\right|(\kappa-2(\kappa^{2}+1)\tau_{\kappa})}{2\kappa-4\tau_{\kappa}}
	\end{align*}
	and
	\begin{eqnarray*}
	\left|a_{3}-\mu a_{2}^{2}\right|
	\leq \left\{
	\begin{array}{ll}
	\dfrac{\left|\kappa\tau_{\kappa}\right|}{2} &; 
	0 \leq \left|\mu-1\right|\leq \dfrac{\kappa-2\tau_{\kappa}}{2\kappa^{2}\left|\tau_{\kappa}\right|}\\
	\dfrac{\left|1-\mu\right|\kappa^{3}\tau_{\kappa}^{2}}{\kappa-2\tau_{\kappa}} &; 
	\left|\mu-1\right|\geq \dfrac{\kappa-2\tau_{\kappa}}{2\kappa^{2}\left|\tau_{\kappa}\right|}.
	\end{array}		
	\right.
	\end{eqnarray*}
\end{corollary}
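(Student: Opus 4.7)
The plan is to obtain this corollary as a direct specialization of Corollary \ref{Cor-Thm3.1} by setting $\gamma=1$. Since the subordination conditions defining $\mathcal{S}\mathcal{L}_{\Sigma}^{\kappa}(\tilde{p}_{\kappa})$---namely $zf'(z)/f(z)\prec\tilde{p}_{\kappa}(z)$ together with the analogous condition on $g=f^{-1}$---coincide with those of $\mathcal{S}\mathcal{L}_{\Sigma}^{\kappa}(\gamma,\tilde{p}_{\kappa})$ when $\gamma=1$, as indicated by the remark following Definition \ref{def1.2}, the inequalities already established in Corollary \ref{Cor-Thm3.1} apply verbatim and only need algebraic simplification. (As a cross-check, the same bounds can also be recovered by taking $\lambda=0$ in Theorem \ref{thm5.1} or $\lambda=1$ in Theorem \ref{thm4.1}, which provides confidence in the constants.)

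The first step is a uniform denominator simplification. Throughout Corollary \ref{Cor-Thm3.1} the recurring quantity is $\gamma\kappa^{2}\tau_{\kappa}+(\kappa-(\kappa^{2}+2)\tau_{\kappa})$. At $\gamma=1$ the $\tau_{\kappa}$ terms collapse via $\kappa^{2}\tau_{\kappa}-(\kappa^{2}+2)\tau_{\kappa}=-2\tau_{\kappa}$, so this expression becomes simply $\kappa-2\tau_{\kappa}$. Inserting this into the $|a_{2}|$ bound of Corollary \ref{Cor-Thm3.1} immediately yields $|a_{2}|\leq |\kappa\tau_{\kappa}|\sqrt{\kappa}/\sqrt{\kappa-2\tau_{\kappa}}$. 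The same substitution in the $|a_{3}|$ bound turns the numerator factor $|(\kappa-(\kappa^{2}+2)\tau_{\kappa})-\gamma\kappa^{2}\tau_{\kappa}|$ into $|\kappa-2(\kappa^{2}+1)\tau_{\kappa}|$, which is positive since $\tau_{\kappa}<0$ for $\kappa>0$, and the denominator into $2(\kappa-2\tau_{\kappa})=2\kappa-4\tau_{\kappa}$, producing the stated estimate.

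For the Fekete--Szeg\"{o} inequality, the same substitution into the piecewise formula reduces the threshold $\dfrac{\gamma\kappa^{2}\tau_{\kappa}+(\kappa-(\kappa^{2}+2)\tau_{\kappa})}{2|\gamma|\kappa^{2}|\tau_{\kappa}|}$ to $\dfrac{\kappa-2\tau_{\kappa}}{2\kappa^{2}|\tau_{\kappa}|}$, collapses the constant branch to $|\kappa\tau_{\kappa}|/2$, and turns the linear branch into $\dfrac{|1-\mu|\kappa^{3}\tau_{\kappa}^{2}}{\kappa-2\tau_{\kappa}}$, matching the three claimed expressions exactly.

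Because the argument is purely a parameter substitution into an already-proved result, I do not anticipate any serious obstacle; the only small piece of bookkeeping is verifying that $\kappa-2(\kappa^{2}+1)\tau_{\kappa}>0$ so the absolute value in the $|a_{3}|$ numerator can be dropped, which follows from $\tau_{\kappa}=(\kappa-\sqrt{\kappa^{2}+4})/2<0$ whenever $\kappa>0$. No further coefficient computation is needed beyond what was carried out in the proof of Corollary \ref{Cor-Thm3.1}.
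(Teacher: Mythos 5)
Your proposal is correct and matches the paper's (implicit) derivation: the paper states Corollary \ref{cor4.1} immediately after Corollary \ref{Cor-Thm3.1} with no separate proof, relying on exactly the specialization $\gamma=1$ you describe, and your simplification $\gamma\kappa^{2}\tau_{\kappa}+(\kappa-(\kappa^{2}+2)\tau_{\kappa})\big|_{\gamma=1}=\kappa-2\tau_{\kappa}$ together with the sign observation $\tau_{\kappa}<0$ reproduces all three stated bounds. Your cross-checks via $\lambda=0$ in Theorem \ref{thm5.1} and $\lambda=1$ in Theorem \ref{thm4.1} are consistent as well, so nothing further is needed.
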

\begin{corollary}\cite{HOG-GMS-JS-2019-CFS-kF}
	\label{cor4.2} Let $f(z)=z+\sum\limits_{n=2}^{\infty}a_{n}z^{n}$ be in the
	class $\mathcal{K}\mathcal{S}\mathcal{L}_{\Sigma}^{\kappa}(\tilde{p}_{\kappa}).$ Then 
	\begin{align*}
	\left|a_2\right|&\leq \dfrac{\left|\kappa\tau_{\kappa}\right|\sqrt{\kappa}}{\sqrt{2(2\kappa-(\kappa^{2}+4)\tau_{\kappa})}}, \;\;\;\; 
	\left|a_3\right| \leq \dfrac{\left|\kappa\tau_{\kappa}\right|(\kappa-2(\kappa^{2}+1)\tau_{\kappa})}{3(2\kappa-(\kappa^{2}+4)\tau_{\kappa})}.
	\end{align*}
	and
	\begin{eqnarray*}
	\left|a_{3}-\mu a_{2}^{2}\right|
	\leq \left\{
	\begin{array}{ll}
	\dfrac{\left|\kappa\tau_{\kappa}\right|}{6} &; 
	0 \leq \left|\mu-1\right|\leq \dfrac{2\kappa-(\kappa^{2}+4)\tau_{\kappa}}{3\kappa^{2}\left|\tau_{\kappa}\right|}\\
	\dfrac{\left|1-\mu\right|\kappa^{3}\tau_{\kappa}^{2}}{2(2\kappa-(\kappa^{2}+4)\tau_{\kappa})} &; 
	\left|\mu-1\right|\geq \dfrac{2\kappa-(\kappa^{2}+4)\tau_{\kappa}}{3\kappa^{2}\left|\tau_{\kappa}\right|}.
	\end{array}		
	\right.
	\end{eqnarray*}
\end{corollary}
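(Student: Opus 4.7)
The plan is to obtain this corollary as an immediate specialization of Theorem \ref{thm5.1}. Recall from item (2) following Definition \ref{def1.4} that the class $\mathcal{K}\mathcal{S}\mathcal{L}_{\Sigma}^{\kappa}(\tilde{p}_{\kappa})$ coincides with $\mathcal{P}\mathcal{S}\mathcal{L}_{\Sigma}^{\kappa}(1;\tilde{p}_{\kappa})$, since taking $\lambda=1$ in the definition reduces the subordination quotient $(zf'+\lambda z^2f'')/((1-\lambda)f+\lambda zf')$ to $1+z^2f''(z)/f'(z)$, which is the defining condition of convex shell-like bi-univalent functions. Therefore it suffices to insert $\lambda=1$ into every estimate proved in Theorem \ref{thm5.1} and simplify.

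First I would handle the common denominator appearing in both the $|a_2|$ and $|a_3|$ bounds. Setting $\lambda=1$ gives
\begin{equation*}
\kappa^{2}\tau_{\kappa}(1+2\lambda-\lambda^{2})+(\kappa-(\kappa^{2}+2)\tau_{\kappa})(1+\lambda)^{2}
= 2\kappa^{2}\tau_{\kappa}+4\kappa-4(\kappa^{2}+2)\tau_{\kappa}
= 2\bigl(2\kappa-(\kappa^{2}+4)\tau_{\kappa}\bigr).
\end{equation*}
Substituting this identity into the bound for $|a_2|$ in Theorem \ref{thm5.1} immediately yields $|a_2|\le |\kappa\tau_{\kappa}|\sqrt{\kappa}/\sqrt{2(2\kappa-(\kappa^{2}+4)\tau_{\kappa})}$, and into the $|a_3|$ bound (with the factor $2(1+2\lambda)=6$ and $(1+\lambda)^{2}=4$) it gives
\begin{equation*}
|a_3|\le \dfrac{4\left|\kappa\tau_{\kappa}\right|(\kappa-2(\kappa^{2}+1)\tau_{\kappa})}{6\cdot 2(2\kappa-(\kappa^{2}+4)\tau_{\kappa})} = \dfrac{\left|\kappa\tau_{\kappa}\right|(\kappa-2(\kappa^{2}+1)\tau_{\kappa})}{3(2\kappa-(\kappa^{2}+4)\tau_{\kappa})},
\end{equation*}
which is exactly the claimed bound.

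Next I would substitute $\lambda=1$ into the two-branch Fekete--Szeg\"o inequality from Theorem \ref{thm5.1}. The first branch gives $|\kappa\tau_{\kappa}|/(2+4\lambda)=|\kappa\tau_{\kappa}|/6$, and the threshold on $|\mu-1|$ becomes
\begin{equation*}
\dfrac{2(2\kappa-(\kappa^{2}+4)\tau_{\kappa})}{2\kappa^{2}|\tau_{\kappa}|\cdot 3} = \dfrac{2\kappa-(\kappa^{2}+4)\tau_{\kappa}}{3\kappa^{2}|\tau_{\kappa}|},
\end{equation*}
matching the threshold in the corollary. The second branch likewise reduces, using the same denominator identity, to $|1-\mu|\kappa^{3}\tau_{\kappa}^{2}/[2(2\kappa-(\kappa^{2}+4)\tau_{\kappa})]$. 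No further work is required, since all inequalities are inherited from Theorem \ref{thm5.1} under the valid choice $\lambda=1\in[0,1]$.

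The only genuine obstacle is bookkeeping: carrying out the algebraic collapse of $\kappa^{2}\tau_{\kappa}(1+2\lambda-\lambda^{2})+(\kappa-(\kappa^{2}+2)\tau_{\kappa})(1+\lambda)^{2}$ at $\lambda=1$ cleanly, and making sure the factor $2(1+2\lambda)=6$ in the $|a_3|$ denominator combines correctly with the $4=(1+\lambda)^{2}$ in the numerator to produce the factor $3$ in the final denominator. Since both simplifications are routine once written down, the proof is essentially a one-line appeal: \emph{apply Theorem \ref{thm5.1} with $\lambda=1$ and simplify}.
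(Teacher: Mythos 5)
Your proposal is correct and is precisely the derivation the paper intends: Corollary \ref{cor4.2} is obtained by setting $\lambda=1$ in Theorem \ref{thm5.1}, using the identification $\mathcal{K}\mathcal{S}\mathcal{L}_{\Sigma}^{\kappa}(\tilde{p}_{\kappa})\equiv\mathcal{P}\mathcal{S}\mathcal{L}_{\Sigma}^{\kappa}(1;\tilde{p}_{\kappa})$, and your algebraic collapse of the common denominator to $2\bigl(2\kappa-(\kappa^{2}+4)\tau_{\kappa}\bigr)$ together with the $4/12=1/3$ simplification in the $|a_3|$ bound checks out. Nothing is missing.
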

\begin{remark}
	Results discussed in Corollaries \ref{cor4.1} and \ref{cor4.2} are coincide with bounds obtained in \cite{HOG-GMS-JS-2019-CFS-kF}. Also, For $\kappa=1$, all the results obtained are coincides with results obtained in \cite{NM-VKB-CA-2016}.
\end{remark}
\section*{Declarations}
\
\\
\
\textbf{Availability of data and material~:~} No data were used to support this study.
\
\\\\
\
\textbf{Competing interests~:~} The authors declare that there are no conflicts of interest regarding the publication of this manuscript. 
\
\\\\
\
\textbf{Funding ~:~} Not applicable.
\
\\\\
\
\textbf{Authors' contributions ~:~}  All authors are equally contribute in writing this manuscript. All authors read and approved the final manuscript."
\
\\\\
\
\textbf{Acknowledgements ~:~} Not applicable.


\end{document}